\definecolor{Myblue}{rgb}{0,0,0.6}
\newcommand{\raisemath}[1]{\mathpalette{\raisem@th{#1}}}
\newcommand{\raisem@th}[3]{\raisebox{#1}{$#2#3$}}
\newcommand{\E}{\text{e}}
\newcommand{\I}{\text{i}}
\newcommand{\B}{\mathcal{B}}
\newcommand{\Pc}{\mathcal{P}}
\newcommand{\C}{\mathds{C}}
\newcommand{\PP}{\mathds{P}}
\newcommand{\Q}{\mathds{Q}}
\newcommand{\Z}{\mathds{Z}}
\newcommand{\DD}{\mathds{D}}
\def\1{\ifmmode\mathrm{1\!l}\else\mbox{\(\mathrm{1\!l}\)}\fi}
\newcommand{\be}{\begin{equation}}
\newcommand{\ee}{\end{equation}}
\newcommand{\bes}{\begin{equation*}}
\newcommand{\ees}{\end{equation*}}
\newcommand{\Hom}{\operatorname{Hom}}
\newcommand{\End}{\operatorname{End}}
\newcommand{\modu}{\operatorname{mod}}
\newcommand{\hmf}{\operatorname{hmf}}
\newcommand{\hmfgr}{\operatorname{hmf}^{\textrm{gr}}}
\newcommand{\ev}{\operatorname{ev}}
\newcommand{\tev}{\widetilde{\operatorname{ev}}}
\newcommand{\coev}{\operatorname{coev}}
\newcommand{\tcoev}{\widetilde{\operatorname{coev}}}
\def\lra{\longrightarrow}
\DeclareMathOperator{\str}{str}
\DeclareMathOperator{\Res}{Res}
\newcommand{\diml}{\dim_{\mathrm{l}}}
\newcommand{\dimr}{\dim_{\mathrm{r}}}
\newcommand{\Dl}{\mathcal D_{\mathrm{l}}}
\newcommand{\Dr}{\mathcal D_{\mathrm{r}}}
\newcommand\arxiv[2]      {\href{http://arXiv.org/abs/#1}{#2}}
\newcommand\doi[2]        {\href{http://dx.doi.org/#1}{#2}}
\newcommand\httpurl[2]    {\href{http://#1}{#2}}
\theoremstyle{definition}
\newtheorem{definition}{Definition}
\newtheorem{defthm}[definition]{Definition and Theorem}
\newtheorem{proposition}[definition]{Proposition}
\newtheorem{theorem}[definition]{Theorem}
\newtheorem{lemma}[definition]{Lemma}
\newtheorem{corollary}[definition]{Corollary}
\newtheorem{remark}[definition]{Remark}
\newtheorem{conjecture}[definition]{Conjecture}
\numberwithin{equation}{section}
\numberwithin{definition}{section}
\numberwithin{figure}{section}
\newcommand\void[1]{}
\begin{document}

\title{Orbifold equivalent potentials}
\author{Nils Carqueville$^*$ \quad Ana Ros Camacho$^\dagger$ \quad Ingo Runkel$^\dagger$
\\[0.5cm]
  \normalsize{\tt \href{mailto:nils.carqueville@scgp.stonybrook.edu}{nils.carqueville@scgp.stonybrook.edu}} \\
 \normalsize{\tt \href{mailto:ana.ros.camacho@uni-hamburg.de}{ana.ros.camacho@uni-hamburg.de}}
 \\
  \normalsize{\tt \href{mailto:ingo.runkel@uni-hamburg.de}{ingo.runkel@uni-hamburg.de}}\\[0.1cm]
  {\normalsize\slshape $^*$Simons Center for Geometry and Physics,}\\[-0.1cm]
  {\normalsize\slshape Stony Brook, NY 11794, USA}\\[-0.1cm]
  {\normalsize\slshape $^\dagger$Fachbereich Mathematik, Universit\"{a}t Hamburg,}\\[-0.1cm]
  {\normalsize\slshape Bundesstra\ss e 55, 20146 Hamburg, Germany}\\[-0.1cm]
}
\date{}
\maketitle

\vspace{-12.8cm}
\hfill {\scriptsize Hamburger Beitr\"age zur Mathematik 493}

\vspace{-1.0cm}

\hfill {\scriptsize ZMP-HH/13-20}

\vspace{12cm}

\begin{abstract}
To a graded finite-rank matrix factorisation of the difference of two homogeneous potentials one can assign two numbers, the left and right quantum dimension. 
The existence of such a matrix factorisation with non-zero quantum dimensions defines an equivalence relation between potentials, giving rise to non-obvious equivalences of categories.

Restricted to ADE singularities, the resulting equivalence classes of potentials are those of type $\{ \mathrm{A}_{d-1} \}$ for~$d$ odd, $\{ \mathrm{A}_{d-1} , \mathrm{D}_{d/2+1} \}$ for~$d$ even but not in $\{12,18,30\}$, and $\{ \mathrm{A}_{11}, \mathrm{D}_{7}, \mathrm{E}_{6} \}$, $\{ \mathrm{A}_{17}, \mathrm{D}_{10}, \mathrm{E}_{7} \}$ and $\{  \mathrm{A}_{29}, \mathrm{D}_{16}, \mathrm{E}_{8} \}$. 
This is the result expected from two-dimensional rational conformal field theory, and it directly leads to new descriptions of and relations between the associated (derived) categories of matrix factorisations and Dynkin quiver representations. 
\end{abstract}

\thispagestyle{empty}
\newpage

\tableofcontents

\section{Introduction and summary}\label{sec:introduction}

Let $k \subset \C$ be a field. 
By a \textsl{homogeneous potential in~$m$ variables} we mean a polynomial $V \in k[x_1,\dots,x_m]$ such that 
\be 
V \big( \lambda^{|x_1|} x_1, \ldots, \lambda^{|x_m|} x_m \big) 
= 
\lambda^{2} V( x_1, \ldots, x_m )
\quad \text{for all } \lambda \in \C^\times
\ee
and 
\be
\dim_k \! \big( k[x_1,\dots,x_m] / (\partial_{x_1} V, \ldots, \partial_{x_m} V) \big) < \infty 
\ee
where $k[x_1,\dots,x_m]$ is viewed as a graded ring by assigning degrees $|x_i| \in \Q_+$ to the variables~$x_i$. 
As we will only be concerned with homogeneous potentials, we shall refer to them simply as \textsl{potentials}. 

The purpose of this paper is two-fold. Firstly, we will define an equivalence relation on the set $\Pc_k$ of all potentials, where the number of variables in the polynomial ring is allowed to vary, but where the field $k$ is kept fixed. Secondly, in the case $k=\C$ we will list all equivalence classes in the restricted set of simple singularities, leading to new equivalences of categories. 

In the remainder of this section we describe these two points in more detail, leaving technical details and supplementary discussions for Sections~\ref{sec:proofs} and~\ref{sec:CFTcomparison}.

\subsection{Orbifold equivalence of potentials}\label{subsec:introductionpart1}

The equivalence relation is defined by the existence of a matrix factorisation with certain properties. 

We will write~$x$ for the sequence of variables $x_1,\dots,x_m$, and we pick a potential~$V \in k[x]$. 
A \textsl{matrix factorisation of~$V$} is a $\Z_2$-graded free $k[x]$-module~$M$ together with an odd $k[x]$-linear endomorphism $d_M$, the \textsl{twisted differential}, such that 
$
	d_M \circ d_M = V \cdot 1_M 
$. 
Let us denote the two homogeneous subspaces of $M$ with respect to the $\Z_2$-grading by $M^0$ and $M^1$, so that $M = M^0 \oplus M^1$. We call $M \equiv (M,d_M)$ a \textsl{graded matrix factorisation} if $M^0$ and $M^1$ are in addition $\Q$-graded,\footnote{%
Often one rescales the degrees $|x_i|$ by their least common denominator, so that one can work with a $\Z$-grading. For our purposes the present convention, where the degrees are normalised such that each potential has degree 2, is more convenient (the grading is then better behaved when taking differences of potentials). Note that a rescaled $\Z$-graded matrix factorisation gives a $\Q$-graded one, but not necessarily vice versa.} 
and if the following properties are satisfied:
(i) Acting with~$x_i$ is an endomorphism of degree $|x_i|$ with respect to the $\Q$-grading on $M$. Consequently, $V$ has degree 2.
(ii) $d_M$ has degree 1 with respect to the $\Q$-grading on $M$.

There are categories of (graded) matrix factorisations whose morphisms are (homogeneous) even linear maps up to homotopy with respect to the twisted differential. These are idempotent complete as they are triangulated and have arbitrary coproducts \cite{NeemanBook}. We denote by $\hmf(k[x],V)$ the idempotent closure of the full subcategory of finite-rank matrix factorisations; this means that its objects are homotopy equivalent to direct summands of finite-rank matrix factorisations. Furthermore, $\hmfgr(k[x],V)$ is the full subcategory of graded matrix factorisations which are homotopy equivalent to finite-rank ones; $\hmfgr(k[x],V)$ is also idempotent complete, see \cite[Lem.\,2.11]{kst0511155}. 

Let now $V(x)$, $W(y) \in \Pc_k$ be two potentials, where $x=x_1,\dots,x_m$ and $y = y_1,\dots,y_n$. Then $W(y)-V(x)$ is a potential in the ring $k[x,y]$. Let $X = (X,d_X)$ be a finite-rank graded matrix factorisation of $W(y)-V(x)$. The even and odd parts of $X = X^0 \oplus X^1$ necessarily have the same rank, say~$r$, and we may as well assume that $X^0 = X^1 = k[x,y]^r$. The twisted differential is then given by two $r$-by-$r$ matrices $d^0$, $d^1$ with entries in $k[x,y]$. We think of these as $k[x,y]$-linear maps $d^0 : X^0 \to X^1$ and $d^1 : X^1 \to X^0$, and we call $(X,d_X)$ \textsl{rank-}$r$. 

We assign two numbers to~$X$, the \textsl{left and right quantum dimension}~\cite{cr1006.5609, cm1208.1481} 
\begin{align}\label{eq:MF-q-dim}
\dim_{\textrm{l}}(X) 
& = 
(-1)^{\binom{m+1}{2}}\Res \left[ \frac{ \str\big( \partial_{x_1} d_{X}\ldots \partial_{x_m} d_{X} \,  \partial_{y_1} d_{X}\ldots \partial_{y_n} d_{X}\big) \operatorname{d}\! y}{\partial_{y_1} W, \ldots, \partial_{y_n} W} \right] , 
\nonumber
\\
\dim_{\textrm{r}}(X) 
& =
(-1)^{\binom{n+1}{2}}\Res \left[ \frac{ \str\big( \partial_{x_1} d_{X}\ldots \partial_{x_m} d_{X} \,  \partial_{y_1} d_{X}\ldots \partial_{y_n} d_{X}\big) \operatorname{d}\! x}{\partial_{x_1} V, \ldots, \partial_{x_m} V} \right] .
\end{align}
These formulas arose in the study of adjunctions in the bicategory of Landau-Ginzburg models, see \cite{cr1006.5609, cm1208.1481} and Remark~\ref{rem:intro} below. Here we just point out the following properties:
\begin{itemize}
\item[-] The definition of the quantum dimensions is independent of the $\Q$-grading on~$X$. 
In fact, the same residue expressions also work for ungraded matrix factorisations; in this case however $\dim_{\textrm{l}}(X)$ and $\dim_{\textrm{r}}(X)$ are not necessarily numbers, but polynomials in $k[x]$ and $k[y]$, respectively, see Lemma~\ref{lem:dimink}. 
\item[-] The quantum dimensions only depend on the isomorphism class of~$X$ in $\hmfgr(k[x,y],W-V)$ or $\hmf(k[x,y],W-V)$, cf.~Sections~\ref{subsec:pivotalbicat} and~\ref{subsec:MF}. Hence they are independent of the choice of isomorphism between~$X$ and $k[x,y]^r \oplus k[x,y]^r$.
\item[-] The quantum dimensions are multiplicative for a suitably defined tensor product. In addition, there is a duality $(-)^\dagger$ on matrix factorisations which exchanges left and right quantum dimensions 
(see again Sections~\ref{subsec:pivotalbicat} and~\ref{subsec:MF}).
\end{itemize}

\begin{defthm}\label{defthm:MF-orbequiv}
We say that two potentials $V(x)$, $W(y) \in \Pc_k$ are \textsl{orbifold equivalent}, $V \sim W$, if there exists a finite-rank graded matrix factorisation of $W-V$ for which the left and right quantum dimension are non-zero. This defines an equivalence relation on $\Pc_k$.
\end{defthm}

We shall prove the above statement, as well as Propositions~\ref{prop:sum+Kn"orrer} and~\ref{prop:MF-summand}, in Section~\ref{subsec:MF}. 
The name `orbifold equivalence' has its roots in the study of orbifolds via defects in two-dimensional quantum field theories \cite{ffrs0909.5013, dkr1107.0495, cr1210.6363}, see also Remark~\ref{rem:intro}. 

\medskip

Two basic properties of orbifold equivalences are:

\begin{proposition}\label{prop:sum+Kn"orrer}
Let $U(z),V(x),W(y) \in \Pc_k$.
\begin{enumerate}
\item (\textsl{Compatibility with external sums}) If $V \sim W$, then $V+U \sim W+U$. 
\item (\textsl{Kn\"orrer periodicity}) $V \sim V + u^2 + v^2$.
\end{enumerate}
\end{proposition}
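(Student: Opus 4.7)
The plan is to reduce both claims to a handful of structural facts about matrix factorisations that will be established in Section~\ref{subsec:MF}: the external tensor product~$\otimes$ (so that $X\otimes Y$ is a matrix factorisation of the sum of the two potentials factorised by~$X$ and~$Y$); the multiplicativity $\dim_{l/r}(X\otimes Y)=\dim_{l/r}(X)\cdot\dim_{l/r}(Y)$ of quantum dimensions under it; and the existence, for every $P\in\Pc_k$, of an \emph{identity defect}~$I_P$, i.e.\ a finite-rank graded matrix factorisation of $P(z')-P(z)$ with $\diml I_P=\dimr I_P=1$. The last of these is obtained by a Koszul-type construction from a decomposition $P(z')-P(z)=\sum_i (z'_i-z_i)\,g_i(z,z')$.

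For part~(i), take the witness~$X$ of $V\sim W$ and form $X\otimes I_U$, which is a finite-rank graded matrix factorisation of
\bes
 (W(y)-V(x))+(U(z')-U(z)) \,=\, \bigl(W(y)+U(z')\bigr)-\bigl(V(x)+U(z)\bigr) .
\ees
By multiplicativity, $\dim_{l/r}(X\otimes I_U)=\dim_{l/r}(X)\cdot 1 \neq 0$, which proves $V+U \sim W+U$.

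For part~(ii), part~(i) applied with the re-labelling $V\to 0$, $W\to u^2+v^2$, $U\to V$ reduces the claim to the single basic orbifold equivalence $0\sim u^2+v^2$. For this I would exhibit the rank-$1$ matrix factorisation
\bes
 d=\begin{pmatrix} 0 & u-\I v \\ u+\I v & 0 \end{pmatrix}
\ees
arising from $u^2+v^2=(u+\I v)(u-\I v)$, graded by $|u|=|v|=1$. A direct application of~\eqref{eq:MF-q-dim} with $m=0$, $n=2$ yields $\str(\partial_u d\,\partial_v d)=2\I$, whence $\diml$ and $\dimr$ lie in $\I\cdot k^\times$; in particular both are non-zero (this is where the hypothesis $k\subset\C$ is used, via $\I\in\C$).

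The only non-routine input is the multiplicativity of the quantum dimensions together with the normalisation $\diml I_P=\dimr I_P=1$; both are properties of the pivotal bicategory of Landau--Ginzburg models and will be verified in Section~\ref{subsec:MF}. Granted them, the two parts are essentially immediate, and I expect the main effort of the actual write-up to go into making the residue-level computation behind the multiplicativity transparent rather than into the skeletal argument sketched above.
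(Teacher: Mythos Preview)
Your treatment of part~(i) coincides with the paper's: both invoke multiplicativity of quantum dimensions under the external tensor product~$\otimes_k$, and tensoring the witness~$X$ with~$I_U$ is precisely the construction implicit in the paper's one-line argument.

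For part~(ii) there is a gap. The reduction to $0\sim u^2+v^2$ via part~(i) is fine, but the rank-one factorisation $u^2+v^2=(u+\I v)(u-\I v)$ has entries in $k[u,v]$ only when $\I\in k$. Your justification ``this is where the hypothesis $k\subset\C$ is used, via $\I\in\C$'' confuses $\I\in\C$ with $\I\in k$: the matrix factorisation must be defined over~$k$, and the inclusion $k\subset\C$ does not put~$\I$ into~$k$. Worse, over a subfield $k\subset\R$ no finite-rank graded factorisation of $u^2+v^2$ can have non-zero quantum dimensions at all: after base change to~$\C$ every such factorisation is (in the homotopy category) a sum of the two rank-one pieces, whose left quantum dimensions are purely imaginary, so the total lies in $\I\,\R\cap k=\{0\}$. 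Thus the equivalence $0\sim u^2+v^2$, and with it Proposition~\ref{prop:sum+Kn"orrer}\,(ii) at $V=0$, genuinely needs $\I\in k$. (The paper's own proof only cites \cite[Sect.\,7.2]{cr1210.6363}, and all its applications have $k=\C$.) If you want a version valid over arbitrary $k\subset\C$, replace $u^2+v^2$ by $uv$: the rank-one factorisation with $d^1=u$, $d^0=v$ then works without any hypothesis on~$k$.
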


To a potential $V \in \Pc_k$ one assigns a rational number, its \textsl{central charge} $c(V)$. Namely, for $V \in k[x_1,\dots,x_m]$ we have 
\be\label{eq:def-Vir-central-charge}
	c(V) = 3\sum_{i=1}^m \big(1-|x_i|\big) \, .
\ee
For example, if $V = x_1^{\,d} + x_1  x_2^{\,2}$, then $|x_1| = \tfrac2d$, $|x_2|=1-\tfrac1d$ and $c(V) = 3-\frac 3 d$, which is the same as the central charge of $W = y_1^{\,2d}+ y_2^{\,2}$. The following proposition gives some simple necessary conditions for any two potentials to be orbifold equivalent.

\begin{proposition}\label{prop:necessary}
Suppose $V,W \in \Pc_k$ are orbifold equivalent. Then
\begin{enumerate}
\item $m - n$ is even, where $V \in k[x_1,\dots,x_m]$ and $W \in k[y_1,\dots,y_n]$.
\item $c(V) = c(W)$.
\end{enumerate}
\end{proposition}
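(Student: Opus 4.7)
The plan is to read both conditions directly off the residue formulas \eqref{eq:MF-q-dim}, applied to a finite-rank graded matrix factorisation $X$ of $W-V$ with $\diml(X)\neq 0 \neq \dimr(X)$.

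For part~(i) I would use the $\Z_2$-grading. Since $d_X$ is odd, so is each $\partial_{x_i} d_X$ and each $\partial_{y_j} d_X$, and hence the composition inside the supertrace has $\Z_2$-parity $m+n$. As the supertrace vanishes on odd endomorphisms, if $m+n$ were odd the numerator in both formulas of \eqref{eq:MF-q-dim} would vanish identically, contradicting the hypothesis; consequently $m-n$ is even.

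For part~(ii) I would work with the $\Q$-grading. Because $d_X$ has $\Q$-degree $1$, the derivatives $\partial_{x_i}d_X$ and $\partial_{y_j}d_X$ are homogeneous of degrees $1-|x_i|$ and $1-|y_j|$ respectively, so that $g := \str\bigl(\partial_{x_1}d_X \cdots \partial_{x_m}d_X\, \partial_{y_1}d_X \cdots \partial_{y_n}d_X\bigr)$ is $\Q$-homogeneous in $k[x,y]$ of total degree $D = \sum_{i=1}^m(1-|x_i|) + \sum_{j=1}^n(1-|y_j|) = \tfrac{1}{3}\bigl(c(V)+c(W)\bigr)$. Next I would invoke the standard fact that the Jacobian algebra $\Jac(W) = k[y]/(\partial_{y_1}W,\ldots,\partial_{y_n}W)$ of the homogeneous isolated singularity $W$ is a graded Frobenius algebra whose one-dimensional socle sits in $\Q$-degree $\sigma_W := \sum_{j=1}^n(2-2|y_j|) = \tfrac{2}{3}c(W)$. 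The Grothendieck residue in $y$ then factors through the $k[x]$-linear projection onto the socle component of $k[x]\otimes_k \Jac(W)$, and therefore sends a $\Q$-homogeneous element of total degree $D$ to a $\Q$-homogeneous polynomial in $k[x]$ of degree $D-\sigma_W$. Thus $\diml(X)$ is homogeneous in $k[x]$ of degree $\tfrac{1}{3}(c(V)-c(W))$ and, by the symmetric computation, $\dimr(X)$ is homogeneous in $k[y]$ of degree $\tfrac{1}{3}(c(W)-c(V))$.

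To conclude, I would observe that a homogeneous polynomial of negative degree is necessarily zero, so $\diml(X)\neq 0$ forces $c(V)\geq c(W)$ while $\dimr(X)\neq 0$ forces $c(W)\geq c(V)$, whence $c(V)=c(W)$. The only step that needs some care is the compatibility of the Grothendieck residue with the $\Q$-grading invoked above, which reduces to the familiar statement that the socle of the Jacobian algebra of a homogeneous isolated singularity is concentrated in a single $\Q$-degree, equal to $\sum_j(2-2|y_j|)$; everything else drops out of the definitions.
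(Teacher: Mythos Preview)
Your argument is correct. Part~(i) is exactly the paper's one-line observation about the supertrace vanishing on odd endomorphisms. For part~(ii) the paper does not give a proof but refers to \cite[Sect.\,6.2]{cr1210.6363}, whereas you supply a self-contained degree count via the residue formula~\eqref{eq:MF-q-dim}: the numerator is $\Q$-homogeneous of degree $\tfrac{1}{3}(c(V)+c(W))$, the Grothendieck residue in~$y$ drops the degree by the socle degree $\tfrac{2}{3}c(W)$ of the Jacobian algebra of~$W$, and comparing the signs of the resulting degrees for $\diml(X)$ and $\dimr(X)$ forces $c(V)=c(W)$. This is precisely the degree-counting idea the paper alludes to in its proof of Lemma~\ref{lem:dimink} (``one may simply count degrees in the explicit formulas in~\eqref{eq:MF-q-dim}''), pushed one step further to pin down the exact degree rather than merely showing it is zero once $c(V)=c(W)$ is known. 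The only external input you need is that the socle of the graded Jacobian algebra of a quasi-homogeneous isolated singularity sits in a single degree $\sum_j(2-2|y_j|)$, which is indeed standard.
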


Part (i) is trivial since the supertrace in~\eqref{eq:MF-q-dim} is zero for an odd matrix. Part (ii) is proved in \cite[Sect.\,6.2]{cr1210.6363}. 

We expect the converse of Proposition~\ref{prop:necessary} to be false. For example, consider the family of potentials $x_1^{\,3} + x_2^{\,3} + x_3^{\,3} + c \, x_1 x_2 x_3$ with $c \in \C$ which all have central charge~3 and whose zero locus is an elliptic curve in $\C\PP^2$. In analogy with \cite{fgrs0705.3129, bbr1205.4647} we expect the potentials for different values of~$c$ to be orbifold equivalent iff the complex structure parameters of the corresponding curves are related by some $\operatorname{GL}(2,\Q)$ transformation, resulting in infinitely many equivalence classes for these potentials.

\medskip

If $V,W \in \Pc_k$ are orbifold equivalent, then the corresponding categories of matrix factorisations are closely related. Namely, let $X = (X,d_X)$ be a finite-rank graded matrix factorisation of $W(y) - V(x)$ with non-zero quantum dimensions. Let further $M = (M,d_M)$ be a graded matrix factoristiation of $V(x)$. Then their tensor product $X \otimes M := (X \otimes_{k[x]} M , d_X \otimes 1 + 1 \otimes d_M)$ is a graded matrix factorisation of $W(y)$ (which is necessarily of infinite rank, but still equivalent to a finite-rank factorisation, for $M \neq 0$). 

\begin{proposition}\label{prop:MF-summand}
Suppose that $V(x)$, $W(y) \in \Pc_k$ are orbifold equivalent and let $X \in \hmfgr(k[x,y],W-V)$ have non-zero quantum dimensions. Then every matrix factorisation in $\hmfgr(k[y],W)$ occurs as a direct summand of $X \otimes M$ for some $M \in \hmfgr(k[x],V)$.
This remains true if `$\hmfgr$' is replaced by `$\hmf$' everywhere. 
\end{proposition}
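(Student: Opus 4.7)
The plan is to work inside the pivotal bicategory of Landau--Ginzburg models referred to in Sections~\ref{subsec:pivotalbicat} and~\ref{subsec:MF}. There a potential is an object, a finite-rank graded matrix factorisation of $W-V$ is a 1-morphism $V \to W$, horizontal composition is tensor product over the intermediate variable ring, and every such 1-morphism $X$ admits an adjoint $\dX$ which is again (equivalent to) a finite-rank graded factorisation. A matrix factorisation of $W(y)$ is a 1-morphism from the trivial object (empty variable set, potential $0$) to $W$, so the proposition is the statement that $X$ absorbs all 1-morphisms into $W$ as direct summands under horizontal composition.

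The first step is to repackage the hypothesis $\dim_{\mathrm l}(X) \neq 0$ as a splitting. With the pivotal identification of left and right adjoints, the adjunction data provide morphisms $\coev : I_W \to X \otimes \dX$ and $\tev : X \otimes \dX \to I_W$, and the residue formulas in~\eqref{eq:MF-q-dim} are precisely characterised by
\be*
\tev \circ \coev \;=\; \dim_{\mathrm l}(X)\cdot \id_{I_W} \in \End(I_W) \cong k
\ee*
(see again Sections~\ref{subsec:pivotalbicat} and~\ref{subsec:MF}). Because $\dim_{\mathrm l}(X) \neq 0$, the morphism $\dim_{\mathrm l}(X)^{-1}\tev$ is a retraction of $\coev$, so $I_W$ is a direct summand of $X \otimes \dX$ in $\hmfgr(k[y,y'], W(y')-W(y))$.

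The second step applies this splitting to an arbitrary $N \in \hmfgr(k[y], W)$. Tensoring with the identity 1-morphism gives $I_W \otimes N \cong N$ in the homotopy category, and horizontally tensoring the splitting from the previous step on the right with $N$ exhibits $N$ as a direct summand of $(X \otimes \dX) \otimes N \cong X \otimes (\dX \otimes N)$ by associativity. Setting $M := \dX \otimes N$, the definition of the bicategory (composition of finite-rank factorisations is equivalent to a finite-rank factorisation) gives $M \in \hmfgr(k[x],V)$. The ungraded statement is proved by the same argument with $\hmf$ in place of $\hmfgr$ throughout; grading enters nowhere in the identity $\tev\circ\coev = \dim_{\mathrm l}(X)\id_{I_W}$ nor in the splitting of an idempotent.

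The main technical obstacle is that $I_W$ is typically of infinite rank over $k[y,y']$ even though it is equivalent to a finite-rank Koszul-type factorisation, so some care is required to verify that the splitting $I_W \hookrightarrow X \otimes \dX$ and the subsequent direct-summand statement really live in the idempotent-complete homotopy category $\hmfgr$ (respectively $\hmf$) rather than in some larger ambient category. This, however, is exactly what the idempotent closure in the definitions of $\hmf$ and $\hmfgr$ is designed to handle, and the required properties of the bicategory have been established in \cite{cr1006.5609, cm1208.1481}, to be reviewed in Section~\ref{subsec:MF}.
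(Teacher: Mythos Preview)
Your argument is correct and is essentially the same as the paper's: the paper invokes the proof of Proposition~\ref{prop:all-is-retract} (specialised to $Y=X^\dagger$ and the trivial potential as source) together with Remark~\ref{rem:all-direct-summand} on idempotent splitting, which is exactly the ``$N$ is a retract of $X\otimes X^\dagger\otimes N$ via adjunction maps, then split the idempotent'' argument you wrote out. One small slip: with the paper's conventions, $\tev_X\circ\coev_X = \dimr(X)\in\End(I_W)$, not $\diml(X)$; since both quantum dimensions are non-zero by hypothesis this does not affect the proof, but you should adjust the label.
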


\subsection{Orbifold equivalence for simple singularities}\label{subsec:orbeqSimSin}

Now we take $k=\C$ and consider the subset of potentials which define simple singularities.\footnote{%
As we shall see in Section~\ref{subsec:simple-sing}, particularly in Remark~\ref{rem:galE6}, we can also work over the cyclotomic field $k = \Q(\zeta)$ for an appropriate root of unity~$\zeta$.} 
These fall into an ADE classification and can be taken to be the following elements of $\C[x_1,x_2]$ (see e.\,g.~\cite[Prop.\,8.5]{Yoshinobook}):
\begin{align}
V^{(\mathrm{A}_{d-1})} &= x_1^{\,d} + x_2^{\,2} & c&=3-3\cdot \tfrac{2}{d} &(d \geqslant 2)&
\nonumber\\
V^{(\mathrm{D}_{d+1})} &=  x_1^{\,d} + x_1 x_2^{\,2} & c&=3-3\cdot \tfrac{2}{2d} &(d \geqslant 3)&
\nonumber\\
V^{(\mathrm{E}_6)} &= x_1^{\,3} + x_2^{\,4} & c&=3-3\cdot \tfrac{2}{12}
\label{eq:simple-sing-ADE}
\\
V^{(\mathrm{E}_7)} &= x_1^{\,3} + x_1 x_2^{\,3} & c&=3-3\cdot \tfrac{2}{18}
\nonumber\\
V^{(\mathrm{E}_8)} &= x_1^{\,3} + x_2^{\,5} & c&=3-3\cdot \tfrac{2}{30}
\nonumber
\end{align}
From Proposition~\ref{prop:necessary} we know that for two potentials to be orbifold equivalent, their central charges have to agree. The preimages of the central charge function on the potentials in~\eqref{eq:simple-sing-ADE} are precisely the sets
\begin{align}
&\big\{ V^{(\mathrm{A}_{d-1})} \big\} \quad  \text{for $d$ odd},
\nonumber \\
&\big\{ V^{(\mathrm{A}_{d-1})} , V^{(\mathrm{D}_{d/2+1})}\big\} \quad \text{for $d$ even and $d \notin \big\{12,18,30\big\}$},
\label{eq:equiv-classes}\\
&\big\{ V^{(\mathrm{A}_{11})},V^{(\mathrm{D}_{7})},V^{(\mathrm{E}_6)} \big\} \, , \quad \big\{ V^{(\mathrm{A}_{17})},V^{(\mathrm{D}_{10})},V^{(\mathrm{E}_7)} \big\} \, , \quad \big\{ V^{(\mathrm{A}_{29})},V^{(\mathrm{D}_{16})},V^{(\mathrm{E}_8)} \big\} \, .
\nonumber 
\end{align}
In fact, this list exhausts the relevant orbifold equivalences: 

\begin{theorem}\label{thm:ADEorbifolds}
The orbifold equivalence classes of the potentials~\eqref{eq:simple-sing-ADE} are precisely those listed in~\eqref{eq:equiv-classes}.
\end{theorem}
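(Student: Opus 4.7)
The necessity of the partition is immediate from the central charge obstruction. Evaluating~\eqref{eq:def-Vir-central-charge} on the potentials~\eqref{eq:simple-sing-ADE} yields $c(V^{(\mathrm{A}_{d-1})}) = 3 - 6/d$, $c(V^{(\mathrm{D}_{d+1})}) = 3 - 3/d$, $c(V^{(\mathrm{E}_6)}) = 5/2$, $c(V^{(\mathrm{E}_7)}) = 8/3$ and $c(V^{(\mathrm{E}_8)}) = 14/5$; a direct check shows that the fibres of $c$ on~\eqref{eq:simple-sing-ADE} are precisely the sets listed in~\eqref{eq:equiv-classes}. By Proposition~\ref{prop:necessary}(ii) no two potentials in distinct sets of~\eqref{eq:equiv-classes} can therefore be orbifold equivalent.

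For the converse one must exhibit orbifold equivalences between all members of each multi-element class in~\eqref{eq:equiv-classes}. Since $\sim$ is an equivalence relation by Definition and Theorem~\ref{defthm:MF-orbequiv}, the task reduces to constructing finite-rank graded matrix factorisations with non-zero left and right quantum dimensions for the potential differences $V^{(\mathrm{D}_{d+1})}(y) - V^{(\mathrm{A}_{2d-1})}(x)$ for every $d \geqslant 2$, together with $V^{(\mathrm{E}_6)}(y) - V^{(\mathrm{D}_7)}(x)$, $V^{(\mathrm{E}_7)}(y) - V^{(\mathrm{D}_{10})}(x)$ and $V^{(\mathrm{E}_8)}(y) - V^{(\mathrm{D}_{16})}(x)$. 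The first, infinite family supplies both the generic even-$d$ equivalences and the A--D bridges inside the three exceptional triples; the remaining three equivalences then complete the triples by transitivity.

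The method in each case is constructive: fix a rank $r$, write an ansatz for a graded rank-$r$ matrix factorisation $X$ of $W - V$ whose entries are polynomials of the degrees dictated by the $\Q$-grading, impose $d_X^2 = (W-V)\cdot \one_X$ to obtain a polynomial system, and solve it. The resulting quantum dimensions are then computed from~\eqref{eq:MF-q-dim} by evaluating the supertrace as a polynomial, reducing it modulo $(\partial_{x_1} V,\dots,\partial_{x_m} V)$ and $(\partial_{y_1} W,\dots,\partial_{y_n} W)$, and reading off the coefficient of the Hessian class, which generates the socles of $\Jac(V)$ and $\Jac(W)$. For the $\mathrm{A}_{2d-1}$--$\mathrm{D}_{d+1}$ family a uniform low-rank ansatz is available, suggested by the classical realisation of $V^{(\mathrm{D}_{d+1})}$ as a $\Z_2$-quotient of $V^{(\mathrm{A}_{2d-1})}$; the quantum dimensions then come out as closed-form expressions in $d$ which are manifestly non-zero.

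The main obstacle lies in the three exceptional equivalences involving $V^{(\mathrm{E}_6)}$, $V^{(\mathrm{E}_7)}$ and $V^{(\mathrm{E}_8)}$: no uniform construction is available, and each must be handled separately by guessing an explicit matrix factorisation of appropriate rank and $\Q$-grading. The shape of a suitable ansatz is suggested by the branching rules of the $E$-type modular invariants of the associated $N{=}2$ minimal models, which predict the rank and the conformal weights of the defect implementing the equivalence. The $\mathrm{E}_8$ case is the most demanding, as the polynomial degrees are largest and the factorisation equation $d_X^2 = (W-V)\cdot \one_X$ is correspondingly rigid; once a valid $X$ has been written down, however, checking the factorisation condition and computing the residues in~\eqref{eq:MF-q-dim} is a finite and direct calculation.
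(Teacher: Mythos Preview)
Your overall architecture matches the paper: the necessary direction via Proposition~\ref{prop:necessary}(ii) is identical, and the sufficient direction proceeds by explicit construction of finite-rank graded matrix factorisations with non-zero quantum dimensions, relying on transitivity to reduce to a spanning set of equivalences.

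The difference lies in which spanning set is chosen and how the constructions are organised. You propose to complete the exceptional triples via the $\mathrm{D}$--$\mathrm{E}$ bridges $V^{(\mathrm{E}_6)}\!\sim\! V^{(\mathrm{D}_7)}$, $V^{(\mathrm{E}_7)}\!\sim\! V^{(\mathrm{D}_{10})}$, $V^{(\mathrm{E}_8)}\!\sim\! V^{(\mathrm{D}_{16})}$, whereas the paper constructs the $\mathrm{A}$--$\mathrm{E}$ bridges $V^{(\mathrm{E}_6)}\!\sim\! V^{(\mathrm{A}_{11})}$, $V^{(\mathrm{E}_7)}\!\sim\! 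V^{(\mathrm{A}_{17})}$, $V^{(\mathrm{E}_8)}\!\sim\! V^{(\mathrm{A}_{29})}$. This choice is not cosmetic: because the $\mathrm{A}$-side is $u^d + v^2$, the paper can start from a known low-rank indecomposable factorisation $X_0$ of $W - v^2$ (taken from the classification in~\cite{kst0511155}), treat~$u$ as a deformation parameter, write the most general homogeneous perturbation $d_{X_u} = d_{X_0} + u\cdot(\dots)$, reduce parameters by similarity transformations, and then solve $d_{X_u}^2 = (W - u^d - v^2)\cdot\one$. The resulting systems are tractable (and, except for $\mathrm{E}_8$, reduce to the single determinant condition $\det d_{X_u}^1 = W - u^d - v^2$). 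Your generic ``write a graded ansatz and solve'' description is correct in principle, but for a $\mathrm{D}$--$\mathrm{E}$ difference there is no equally clean seed factorisation to deform from, so the system one would have to solve is substantially larger; the paper's route buys a concrete and short computation, culminating in explicit $2\times 2$ (or $4\times 4$ for $\mathrm{E}_8$) matrices with closed-form quantum dimensions.
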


This is our main result (together with Corollaries~\ref{cor:hmfmod} and~\ref{cor:Etypemod} below), which we prove in Section~\ref{subsec:simple-sing} by explicitly constructing graded matrix factorisations~$X$ with non-zero quantum dimensions for the equivalences
$V^{(\mathrm{A}_{2d-1})} \sim  V^{(\mathrm{D}_{d+1})}$ (already given in \cite{cr1210.6363}), as well as $V^{(\mathrm{A}_{11})} \sim V^{(\mathrm{E}_6)}$, $V^{(\mathrm{A}_{17})} \sim V^{(\mathrm{E}_7)}$ and $V^{(\mathrm{A}_{29})} \sim V^{(\mathrm{E}_8)}$.

\medskip

Proposition~\ref{prop:MF-summand} and Theorem~\ref{thm:ADEorbifolds} can be strengthened to equivalences of categories by invoking the general theory of equivariant completion of \cite{cr1210.6363}: 

\begin{corollary}\label{cor:hmfmod}
For $V,W$ and~$X$ as in Proposition~\ref{prop:MF-summand} we have 
\be\label{eq:hmf=mod}
\hmfgr (k[y],W ) \cong \modu ( X^\dagger \otimes X ) \, . 
\ee
\end{corollary}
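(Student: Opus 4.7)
The plan is to deduce the equivalence from the general theory of equivariant completion of pivotal bicategories developed in \cite{cr1210.6363}, applied to the bicategory $\LGgr$ of graded Landau-Ginzburg models: objects are potentials, 1-morphisms $V \to W$ are graded matrix factorisations of $W-V$, horizontal composition is tensor product over the middle variables, and duals are given by $(-)^\dagger$. Under the associated pivotal structure the residue expressions \eqref{eq:MF-q-dim} are precisely the categorical left and right traces of~$X$.

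The first step is to exhibit $A := X^\dagger \otimes X$ as a symmetric separable Frobenius algebra in the monoidal category of endo-1-morphisms of~$V$. Its algebra structure (unit $\coev_X$, multiplication $1 \otimes \ev_X \otimes 1$) comes from the adjunction $X \dashv X^\dagger$; its Frobenius coalgebra structure comes from the opposite adjunction $X^\dagger \dashv X$ together with the pivotal identifications; symmetry is automatic. Separability is where the hypothesis enters: the composite $\mu \circ \Delta : A \to A$ is shown by the pivotal graphical calculus to be a scalar multiple of $\id_A$ with scalar proportional to $\diml(X)\dimr(X)$, so after a suitable rescaling of the adjunction data it is normalised to $\id_A$ precisely because both quantum dimensions are non-zero.

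The second step is to construct the equivalence via $G : \modu(A) \to \hmfgr(k[y],W)$, $M \mapsto X \otimes_A M$, and $F : \hmfgr(k[y],W) \to \modu(A)$, $N \mapsto X^\dagger \otimes N$ with $A$-action induced by $\ev_X$; the relative tensor product $\otimes_A$ is the splitting of the separability projector, which exists in $\hmfgr$ by the idempotent completeness recalled in the introduction. The pair forms an adjunction $G \dashv F$ whose unit $\id \to FG$ is automatically an isomorphism, since $FG(M) = X^\dagger \otimes (X \otimes_A M) \cong (X^\dagger \otimes X) \otimes_A M = A \otimes_A M \cong M$. The remaining task is the counit $GF \to \id$: on an object of the form $X \otimes M$ a direct computation gives $GF(X \otimes M) = X \otimes_A (X^\dagger \otimes X) \otimes M \cong X \otimes M$, so the counit is an isomorphism there; by Proposition~\ref{prop:MF-summand} every object of $\hmfgr(k[y],W)$ is a summand of some such $X \otimes M$, and naturality together with idempotent completeness then forces the counit to be an isomorphism everywhere. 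The main obstacle is the pivotal-bicategorical bookkeeping in the $\Q$-graded homotopy setting---Frobenius structure, separability, associativity of $\otimes_A$, the counit computation---but this is exactly what \cite{cr1210.6363} carries out for arbitrary pivotal bicategories with idempotent-complete hom-categories, so the corollary follows by direct application of that machinery to $\LGgr$.
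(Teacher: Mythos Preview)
Your proposal is correct and follows exactly the route the paper indicates: the paper offers no proof beyond the sentence ``by invoking the general theory of equivariant completion of \cite{cr1210.6363}'', and your outline is precisely a sketch of how that machinery applies to~$\LGgr$. Two small slips worth fixing: in the paper's conventions the unit and multiplication of $A = X^\dagger \otimes X$ are built from $\tcoev_X$ and $\tev_X$ (not $\coev_X$ and $\ev_X$, which land on the wrong side), and the separability scalar $\mu \circ \Delta$ is a \emph{single} quantum dimension, not the product $\diml(X)\dimr(X)$; the other dimension enters when checking the inverse direction of the equivalence. Neither slip affects the argument, since both dimensions are non-zero by hypothesis.
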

Here $\modu ( X^\dagger \otimes X )$ is the \textsl{category of modules over $X^\dagger \otimes X$}, which is made up of matrix factorisations of~$V$ together with a compatible action of the monoid $X^\dagger \otimes X \in \hmfgr(k[x,x'], V(x)-V(x'))$. For more details we refer to \cite{cr1210.6363}, or to \cite{BCP2, OberwolfachAbstract} for much shorter reviews. 

In Section~\ref{subsec:simple-sing} we will explicitly compute $X^\dagger \otimes X$ for the matrix factorisations~$X$ giving rise to the orbifold equivalences of simple singularities. For those involving E-type singularities we find that $X^\dagger \otimes X$ decomposes into sums of well-known matrix factorisations: 

\begin{corollary}\label{cor:Etypemod}
We have 
\begin{align}
\hmfgr\! \big( \C[x], V^{(\mathrm{E}_6)} \big) 
& \cong 
\modu\! \big( P_{\{0\}} \oplus P_{\{-3,-2,\ldots,3\}} \big) \, , 
\nonumber
\\
\hmfgr\! \big( \C[x], V^{(\mathrm{E}_7)} \big) 
& \cong 
\modu\! \big( P_{\{0\}} \oplus P_{\{-4,-3,\ldots,4\}} \oplus P_{\{-8,-7,\ldots,8\}} \big) \, , 
\label{eq:hmfEmod}
\\
\hmfgr\! \big( \C[x], V^{(\mathrm{E}_8)} \big) 
& \cong 
\modu\! \big( P_{\{0\}} \oplus P_{\{-5,-4,\ldots,5\}} \oplus P_{\{-9,-8,\ldots,9\}} \oplus P_{\{-14,-13,\ldots,14\}} \big) 
\nonumber
\end{align}
where the rank-one matrix factorisations $P_S$ of \cite{br0707.0922} are defined in~\eqref{eq:P_S-def}. 
\end{corollary}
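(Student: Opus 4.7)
By Corollary~\ref{cor:hmfmod} applied to the finite-rank graded matrix factorisations~$X$ of $V^{(\mathrm{E}_n)}(y) - V^{(\mathrm{A}_{d-1})}(x)$ whose construction and nonzero quantum dimensions establish the orbifold equivalences $V^{(\mathrm{A}_{11})} \sim V^{(\mathrm{E}_6)}$, $V^{(\mathrm{A}_{17})} \sim V^{(\mathrm{E}_7)}$, and $V^{(\mathrm{A}_{29})} \sim V^{(\mathrm{E}_8)}$ in Section~\ref{subsec:simple-sing}, the three equivalences~\eqref{eq:hmfEmod} will follow as soon as the monoid $X^\dagger \otimes X \in \hmfgr(\C[x,x'], V^{(\mathrm{A}_{d-1})}(x) - V^{(\mathrm{A}_{d-1})}(x'))$ is identified, for $d \in \{12,18,30\}$ respectively, with the displayed direct sums of rank-one factorisations $P_S$.

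To carry out that identification, I would first form $X^\dagger$ explicitly via the duality recalled in Section~\ref{subsec:MF} (swapping the roles of source and target variables and taking the appropriate transpose of $d_X$), and then build the tensor product $X^\dagger \otimes_{\C[y]} X$, which at this stage is a rank-$r^2$ graded factorisation of $V^{(\mathrm{A}_{d-1})}(x) - V^{(\mathrm{A}_{d-1})}(x')$ over $\C[x,x',y]$. Since the intermediate variables $y_1, y_2$ do not appear in the resulting potential, repeated Gaussian elimination (splitting off contractible summands generated by the $y$-dependent entries of $d_{X^\dagger \otimes X}$) together with Kn\"orrer periodicity in the $y$-direction reduces this to a finite-rank graded matrix factorisation of $V^{(\mathrm{A}_{d-1})}(x) - V^{(\mathrm{A}_{d-1})}(x')$ living on $\C[x,x']$. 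This simplification is tractable by hand for $\mathrm{E}_6$ and feasible with computer algebra for $\mathrm{E}_7$ and $\mathrm{E}_8$.

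The reduced factorisation is then split into indecomposables using the Brunner--Roggenkamp classification~\cite{br0707.0922}: modulo Kn\"orrer in the $x_2$-direction, every indecomposable finite-rank graded matrix factorisation of $x_1^d - x_1'^d$ is a $P_S$ for a unique subset $S \subset \Z/d\Z$, up to grading shift and complementation. To detect which subsets actually appear, and with which multiplicity, I would compute the graded morphism spaces $\Hom(P_S, X^\dagger \otimes X)$ for the candidate subsets; these reduce to direct (if sizable) Koszul/residue computations. The main obstacle is therefore computational rather than conceptual: handling the explicit rank-$r^2$ matrices and correctly tracking the $\Q$-grading shifts so that the subsets $S$ come out as the displayed symmetric intervals. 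Two natural consistency checks are that $P_{\{0\}}$ must occur with multiplicity one, as the unit of the Frobenius algebra $X^\dagger \otimes X$ on $\hmfgr(\C[x], V^{(\mathrm{A}_{d-1})})$, and that $\sum_S \diml(P_S) = \diml(X) \cdot \dimr(X)$, enforced by the multiplicativity of quantum dimensions recalled after Definition and Theorem~\ref{defthm:MF-orbequiv}.
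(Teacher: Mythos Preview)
Your proposal is correct and follows the same overall strategy as the paper: apply Corollary~\ref{cor:hmfmod}, compute $X^\dagger \otimes X$ and reduce it to a finite-rank factorisation over $\C[x,x']$, then identify the result with the displayed sum of $P_S$'s (up to the Kn\"orrer factor $I_{v^2}$).

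The tactical details differ slightly. For the reduction step the paper invokes the Dyckerhoff--Murfet pushforward \cite{dm1102.2957} as implemented in \cite{khovhompaper}, rather than describing it as ad hoc Gaussian elimination; this gives a finite-rank representative $A'$ directly. For the identification step the paper does not appeal to a classification of indecomposables of $u^d-u'^d$ nor compute $\Hom(P_S,-)$; instead it produces, for the distinguished root $t=t_{\mathrm{cft}}$, an explicit invertible matrix of polynomials realising the isomorphism $A' \cong P_{\{0\}} \oplus P_{\{-3,\dots,3\}}$ (and similarly for $\mathrm{E}_7$, $\mathrm{E}_8$) via row and column manipulations. Your Hom-space approach would also work, but note that \cite{br0707.0922} introduces the $P_S$ without proving they exhaust the indecomposables, so the explicit-isomorphism route avoids that extra input. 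Your consistency checks via the unit and quantum dimensions are a nice addition not spelled out in the paper.
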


\begin{remark}\label{rem:intro} 
\begin{enumerate}
\item 
The construction of the equivalence relation can be repeated in a number of slightly modified settings. For example, one can work with ungraded matrix factorisations, or one can work over any commutative ring~$k$ as in \cite{cm1208.1481}. Or, instead of using a $\Q$-grading, one can consider matrix factorisations with R-charge in the sense of \cite{cr1006.5609} (which is more general). 

The setting used in this paper was chosen to be on the one hand as simple as possible -- hence working over $k\subset \C$ -- and on the other hand to be strong enough for us to be able to prove the decomposition of simple singularities into equivalence classes \eqref{eq:equiv-classes} -- hence the homogeneous potentials and the $\Q$-grading. 
(In the ungraded setting we do not know how to exclude the existence of equivalences beyond those in~\eqref{eq:equiv-classes}.)
\item
The decompositions~\eqref{eq:equiv-classes} and~\eqref{eq:hmfEmod} are expected from two-dimensional rational conformal field theory. According to the (conjectural) CFT/LG correspondence, the infrared fixed point of the Landau-Ginzburg model with potential~$V$ is a conformal field theory with central charge $c(V)$ \cite{m1989,vw1989,howewest}. 
Mathematically, this predicts a relation between the graded matrix factorisations of a given potential and the representation theory of a super vertex operator algebra. This relation is supported by explicit computations on both sides in examples but not understood (or even precisely formulated) in general.

For simple singularities, the fixed points are $\mathcal N=2$ supersymmetric minimal models. The classification of the latter contains the above ADE series \cite{m1989,vw1989, cv9211097, g9608063, g0812.1318}. Matrix factorisations of the difference of two potentials are interpreted as line defects on the CFT side \cite{br0707.0922, br0712.0188, cr0909.4381, cr1006.5609}. It is known in rational CFT \cite{Frohlich:2006ch, ffrs0909.5013} that there are topological line defects with non-zero quantum dimensions, linking the minimal models in the equivalence classes corresponding to~\eqref{eq:equiv-classes}.
We will compare this with our results in more detail in Section~\ref{sec:CFTcomparison}.

\item
It was shown in \cite{kst0511155} that for a simple singularity $V\in \C[x]$, $\hmfgr(\C[x],V)$ is equivalent to $\DD^{\textrm{b}}(\operatorname{Rep}\C Q)$, where~$Q$ is (any choice of) the associated Dynkin quiver. Thus by Theorem~\ref{thm:ADEorbifolds} and \cite{cr1210.6363} the derived representation theory of ADE quivers enjoys orbifold equivalences analogous to~\eqref{eq:hmfEmod}.\footnote{%
Such a relation between A- and D-type quivers was already proven in \cite{ReitenRiedtmann} by different methods.} 
The monoids $X^\dagger \otimes X \cong P_{\{0\}} \oplus \ldots$ translate into functors on $\DD^{\textrm{b}}(\operatorname{Rep}\C Q)$ whose actions on simple objects are easily computable. 
\end{enumerate}
\end{remark}

\subsubsection*{Acknowledgements}

We thank 
    Hanno Becker, 
    Ilka Brunner, 
    Calin Lazaroiu,
    Wolfgang Lerche,
    Daniel Murfet, 
    Daniel Plencner, 
    Alexander Polishchuk, 
    Daniel Roggenkamp,
    Eric Sharpe,
    Duco van Straten, 
    Catharina Stroppel,
and 
    Atsushi Takahashi. 
In addition we are grateful to the organisers of the inspirational Oberwolfach workshop ``Matrix Factorizations in Algebra, Geometry, and Physics'', without which numerous helpful discussions would not have taken place.
We also~acknowledge support from the German Science Foundation (DFG), N.\,C.~and I.\,R.~within the Collaborative Research Center 676 ``Particles, Strings and the Early Universe'', A.\,R.\,C.~as part of the Research Training Group 1670 ``Mathematics Inspired by String Theory and QFT''.

\section{Proofs}\label{sec:proofs}

In this section we provide proofs of the results summarised above, with the claims of Sections~\ref{subsec:introductionpart1} and~\ref{subsec:orbeqSimSin} proven in Sections~\ref{subsec:MF} and~\ref{subsec:simple-sing}, respectively. 
The discussion for simple singularities can also be viewed as showcasing methods that may prove useful for constructing further orbifold equivalences between potentials. 

The notion of orbifold equivalence actually makes sense between objects in any pivotal bicategory. For context and completeness this is explained in Section~\ref{subsec:pivotalbicat}, after which we turn to concrete matrix factorisations.

\subsection{Pivotal bicategories}\label{subsec:pivotalbicat}

In this section (only) we assume some familiarity with bicategories, referring to \cite{bor94} for an introduction. We shall adopt the notation and conventions of \cite{cr1210.6363}; more detailed explanations of the properties of duals and dimensions stated below can be found for example in \cite[Sect.\,2]{cr1006.5609} and \cite[Sect.\,2.1]{cm1208.1481}. 

We denote the unit 1-morphism of an object~$a$ in a bicategory~$\B$ by~$I_a$, its left action by~$\lambda$, and the (horizontal) composition of 1-morphisms $X \in \B(a,b)$, $Y\in \B(b,c)$ is written $Y\otimes X \in \B(a,c)$. 

We say that~$\B$ \textsl{has left adjoints} if for each $X \in \B(a,b)$ there is an $X^\dagger \in \B(b,a)$ together with 2-morphisms 
\be\label{eq:evcoev}
\ev_X : X^\dagger \otimes X \lra I_a
\, , \quad 
\coev_X : I_b \lra X \otimes X^\dagger
\ee
which are the counit and unit of an adjunction, see \cite{GMbook}. Such a~$\B$ is \textsl{pivotal} if for all $a,b\in\B$ there are natural isomorphisms $\delta^{a,b}$ between the identity functor and $(-)^{\dagger\dagger}$ restricted to $\B(a,b)$, compatible with~$\otimes$. In this case~$X^\dagger$ is also right adjoint to~$X$ as exhibited by the maps
\be\label{eq:evcoevtilde}
\tev_X : X \otimes X^\dagger \lra I_b
\, , \quad
\tcoev_X : I_a \lra X^\dagger \otimes X
\ee
defined by $\ev_{X^\dagger} \circ (\delta_X^{a,b} \otimes 1_{X^\dagger})$ and $(1_{X^\dagger} \otimes (\delta_X^{a,b})^{-1}) \circ \coev_{X^\dagger}$, respectively. 

From now on we assume that we are given a pivotal bicategory~$\B$. For every $X\in \B(a,b)$ its \textsl{left} and \textsl{right quantum dimensions} are defined as 
\be
\diml(X) = \ev_X \circ\, \tcoev_X \in \End(I_a)
\, , \quad 
\dimr(X) = \tev_X \circ \coev_X \in \End(I_b) \, .
\ee
One can show that the quantum dimensions only depend on the isomorphism class of~$X$, and that
\be\label{eq:dimadj}
\diml(X^\dagger) = \dimr(X) \, . 
\ee

The left quantum dimension of~$X$ is the image of $1_{I_b}$ under the map $\Dl(X) : \End(I_b) \rightarrow \End(I_a)$ given by 
\be
\Dl(X) = \ev_X \circ \, [1_{X^\dagger} \otimes (\lambda_X \circ ((-)\otimes 1_X) \circ \lambda_X^{-1})) ] \circ \, \tcoev_X  \,  ,
\ee 
and similarly $\dimr(X) = \Dr(X)(1_{I_a})$. One finds that for $Y\in \B(b,c)$ these operators satisfy 
\be\label{eq:dimmult}
\Dl(X) \circ \Dl(Y) = \Dl (Y\otimes X) 
\, , \quad 
\Dr(Y) \circ \Dr(X) = \Dr (Y\otimes X) \, . 
\ee

\begin{definition}\label{def:orb-equiv}
Let~$\B$ be a pivotal bicategory.
Two objects $a,b\in\B$ are \textsl{orbifold equivalent}, $a\sim b$, if there is an $X\in\B(a,b)$ such that
\begin{enumerate}
\item $X$ has invertible left and right quantum dimensions, and
\item $\Dl(X)$ and $\Dr(X)$ map invertible quantum dimensions to automorphisms. 
\end{enumerate}
\end{definition}

Condition (ii) is needed for orbifold equivalence in this general setting to be an equivalence relation (see Theorem~\ref{thm:orb-equiv}). 
This condition may be hard to check directly, as it requires knowing all invertible elements in $\End(I_{a})$ and $\End(I_{b})$ which arise as quantum dimension of some 1-morphism. 
A special case where condition (ii) is already implied by condition (i) is if~$\B$ is such that every Hom-category $\B(a,b)$ is $R$-linear for some fixed commutative ring~$R$, and such that the left and right quantum dimensions of every 1-morphism $X \in \B(a,b)$ are in $R\cdot 1_{I_a}$ and $R\cdot 1_{I_b}$, respectively. In this case one may think of the left/right quantum dimension as an element of $R$, and one can check that $\dim_{\mathrm{l/r}}(X \otimes Y) = \dim_{\mathrm{l/r}}(X) \cdot \dim_{\mathrm{l/r}}(Y)$, where the product on the right is in $R$. Graded finite-rank matrix factorisations are an example of this $R$-linear setting (see Lemma~\ref{lem:dimink} below).

\begin{theorem}\label{thm:orb-equiv}
Orbifold equivalence is an equivalence relation. 
\end{theorem}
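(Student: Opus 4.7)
The plan is to verify reflexivity, symmetry, and transitivity directly from Definition~\ref{def:orb-equiv}, exploiting the multiplicativity relations \eqref{eq:dimmult} and the pivotal duality identities.

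For \emph{reflexivity}, the obvious candidate is the unit 1-morphism $X = I_a$, for which $I_a^\dagger \cong I_a$ and the evaluation and coevaluation maps reduce to (inverses of) unitors. This forces $\diml(I_a) = 1_{I_a} = \dimr(I_a)$, which is invertible, and $\Dl(I_a), \Dr(I_a)$ are (up to canonical unitors) the identity on $\End(I_a)$, so they trivially send invertible quantum dimensions to automorphisms.

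For \emph{transitivity}, suppose $a\sim b$ is witnessed by $X \in \B(a,b)$ and $b\sim c$ by $Y\in\B(b,c)$; I claim that $Y\otimes X$ witnesses $a\sim c$. Using the multiplicativity identity \eqref{eq:dimmult} one has $\diml(Y\otimes X) = \Dl(Y \otimes X)(1_{I_c}) = \Dl(X)\bigl(\Dl(Y)(1_{I_c})\bigr) = \Dl(X)(\diml(Y))$, and analogously for $\dimr$. Since $\diml(Y)$ is an invertible quantum dimension and $X$ satisfies condition (ii), $\Dl(X)(\diml(Y))$ is an automorphism and hence invertible, giving condition (i) for $Y\otimes X$. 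For condition (ii), take any invertible quantum dimension $\phi = \diml(Z) \in \End(I_c)$. Then by \eqref{eq:dimmult} again, $\Dl(Y)(\phi) = \diml(Z\otimes Y)$ is itself a quantum dimension, which is an automorphism (hence invertible) by condition (ii) applied to $Y$. A second application of condition (ii) to $X$ then shows $\Dl(Y\otimes X)(\phi) = \Dl(X)\bigl(\Dl(Y)(\phi)\bigr)$ is an automorphism. The argument for $\Dr$ is symmetric using $\Dr(Y)\circ\Dr(X) = \Dr(Y\otimes X)$.

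For \emph{symmetry}, if $X\in\B(a,b)$ witnesses $a\sim b$, the candidate is $X^\dagger \in \B(b,a)$. Condition (i) follows immediately from \eqref{eq:dimadj} together with its counterpart $\dimr(X^\dagger) = \diml(X^{\dagger\dagger}) = \diml(X)$, where the second equality uses the pivotal isomorphism $\delta_X^{a,b}$. For condition (ii), one needs to identify $\Dl(X^\dagger)$ with $\Dr(X)$ (and $\Dr(X^\dagger)$ with $\Dl(X)$) as maps between the relevant endomorphism monoids. This is obtained by substituting the definitions \eqref{eq:evcoevtilde} of $\tev_X, \tcoev_X$ into $\Dr(X)$ and rewriting the resulting pasting diagram using naturality of $\delta$ and the compatibility with $\otimes$; after this rewriting one recovers the formula for $\Dl(X^\dagger)$ expressed through $\ev_{X^\dagger}, \tcoev_{X^\dagger}$. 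With this identification condition (ii) for $X$ directly yields condition (ii) for $X^\dagger$.

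The main obstacle is the last pivotal identification $\Dl(X^\dagger) = \Dr(X)$: while morally clear, rigorously it is a diagrammatic 2-morphism manipulation involving the pivotal structure isomorphisms and their coherence with $\otimes$, and one must be careful that the various unit and associativity coherences assembled inside $\Dl$ and $\Dr$ indeed match up under $\delta$. Once this identification is in hand, symmetry (and the corresponding clause for right dimensions in transitivity) is immediate, and the three properties together establish the theorem.
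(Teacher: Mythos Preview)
Your proof is correct and follows the same route as the paper's: reflexivity via $I_a$, symmetry via $X^\dagger$ using $\Dr(X^\dagger)=\Dl(X)$, and transitivity via $Y\otimes X$ using the multiplicativity \eqref{eq:dimmult}. The only cosmetic difference is that the paper unpacks condition~(ii) into two clauses (ii-r) and (ii-l) and then observes that, since $\dimr(Z)=\diml(Z^\dagger)$, each clause automatically covers both left and right quantum dimensions; you instead argue directly with $\diml$ and invoke symmetry for $\dimr$. Your caution about the pivotal identity $\Dl(X^\dagger)=\Dr(X)$ is well placed but not a gap: the paper simply cites it as a standard consequence of pivotality (cf.\ the references to \cite{cr1006.5609, cm1208.1481} given at the start of Section~\ref{subsec:pivotalbicat}).
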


\begin{proof}
To see reflexivity $a\sim a$ one can take $X = I_a$, for which $\Dl(I_a) = 1_{I_a} = \Dr(I_a)$. 

For symmetry and transitivity it is helpful to expand condition (ii) of the equivalence relation in more detail: 
\begin{enumerate}
\item[(ii-r)] 
for every $e\in\B$ and every $Z \in \B(e,a)$ such that $\dimr(Z)$ is invertible in $\End(I_a)$, we have that $\Dr(X) (\dimr(Z))$ is invertible in $\End(I_b)$; 
\item[(ii-l)] 
for every $f\in\B$ and every $Z' \in \B(b,f)$ such that $\diml(Z')$ is invertible in $\End(I_b)$, we have that $\Dl(X) (\diml(Z'))$ is invertible in $\End(I_a)$.
\end{enumerate}
Of course, since (ii-r) holds for all $Z$, it also holds for all $Z^\dagger$, and hence $\Dr(X)$ also maps invertible left quantum dimensions to automorphisms. The same applies to (ii-l).

Symmetry $a\sim b \Leftrightarrow b\sim a$ follows by replacing $X$ by $X^\dagger$. That $X^\dagger$ satisfies (i) follows from \eqref{eq:dimadj}. To see (ii-r) for $X^\dagger$, use $\Dr(X^\dagger)=\Dl(X)$ and that (ii-l) holds for $X$. Condition (ii-l) follows analogously.

To check transitivity, consider $X\in\B(a,b)$ and $Y\in\B(b,c)$ satisfying conditions (i) and (ii). Then $\dimr(Y \otimes X) = \Dr(Y) (\dimr(X))$ which is invertible by~(i) for~$X$ and (ii-r) for $Y$. Dito for $\diml$. Next let $Z \in \B(e,a)$ be as in (ii-r) above. Then $\Dr(Y \otimes X) (\dimr(Z)) = \Dr(Y) (\dimr(X \otimes Z))$. Now $\dimr(X \otimes Z)$ is invertible by (ii-r) for $X$ and therefore $\Dr(Y) (\dimr(X \otimes Z))$ is invertible by (ii-r) for $Y$. That $Y \otimes X$ satisfies (ii-l) is seen similarly.
\end{proof}

We conclude our brief general discussion with an implication of the existence of orbifold equivalences in the setting of retracts and idempotent splittings. 
An object~$S$ in some category is called a \textsl{retract} of an object~$U$ if there are morphisms $e : S \to U$ and $r : U \to S$ such that $r \circ e = 1_S$. In particular, $e$ is mono and so~$S$ is a subobject of~$U$.

\begin{proposition}\label{prop:all-is-retract}
Let $a,b,c,d \in \B$ and let $X \in \B(a,b)$, $Y \in \B(c,d)$ have invertible left quantum dimensions. 
Every~$Z \in \B(a,c)$ is a retract of $Y^\dagger \otimes F \otimes X$ for some suitable ($Z$-dependent) $F \in \B(b,d)$.
\end{proposition}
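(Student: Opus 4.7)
The plan is to choose $F := Y \otimes Z \otimes X^\dagger \in \B(b,d)$, so that (modulo associators) $Y^\dagger \otimes F \otimes X = Y^\dagger \otimes Y \otimes Z \otimes X^\dagger \otimes X$. With this choice, the adjunction units and counits of~$X$ and~$Y$ supply natural candidates for a section and a retraction, and the hypothesis that $\diml(X)$ and $\diml(Y)$ are invertible will allow us to turn these into an honest retract.

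Concretely, I would assemble
\[
e \;:=\; (\tcoev_Y \otimes 1_Z \otimes \tcoev_X) \circ (\text{unitors}) \,:\; Z \lra Y^\dagger \otimes Y \otimes Z \otimes X^\dagger \otimes X
\]
and, going the other way,
\[
\tilde r \;:=\; (\text{unitors}) \circ (\ev_Y \otimes 1_Z \otimes \ev_X) \,:\; Y^\dagger \otimes Y \otimes Z \otimes X^\dagger \otimes X \lra Z \, .
\]
Using the interchange law, the composition $\tilde r \circ e$ collects the adjunction data on the $I_c$- and $I_a$-factors into $\ev_Y \circ \tcoev_Y = \diml(Y)$ and $\ev_X \circ \tcoev_X = \diml(X)$, leaving $1_Z$ untouched in the middle; transporting these through the unitors of~$\B$ then produces a single endomorphism $\phi \in \End(Z)$ obtained by acting with $\diml(Y)$ on the $c$-side of~$Z$ and $\diml(X)$ on the $a$-side. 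By interchange the two actions commute.

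Since $\diml(X)$ and $\diml(Y)$ are invertible in $\End(I_a)$ and $\End(I_c)$, the induced actions on~$Z$ are invertible in $\End(Z)$ (with inverses coming from the corresponding inverse elements), and hence $\phi$ itself is invertible. Setting $r := \phi^{-1} \circ \tilde r$ then yields $r \circ e = \phi^{-1} \circ \phi = 1_Z$, exhibiting~$Z$ as a retract of $Y^\dagger \otimes F \otimes X$. The only mildly delicate point is the identification $\tilde r \circ e = \phi$ on the nose, which amounts to pulling the factors that compose to $\diml(Y)$ and $\diml(X)$ past $1_Z$ using interchange and absorbing the resulting unitors and associators via bicategorical coherence; this is routine and presents no conceptual obstacle.
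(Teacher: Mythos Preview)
Your argument is correct and is essentially identical to the paper's own proof: the paper also sets $F = Y \otimes Z \otimes X^\dagger$, takes $e = \tcoev_Y \otimes 1_Z \otimes \tcoev_X$, and defines the retraction directly as $r = \big(\diml(Y)^{-1} \otimes 1_Z \otimes \diml(X)^{-1}\big) \circ \big(\ev_Y \otimes 1_Z \otimes \ev_X\big)$, which is your $\phi^{-1}\circ\tilde r$ with the normalisation absorbed from the start. The only difference is cosmetic---you are a bit more explicit about coherence isomorphisms, while the paper suppresses them.
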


\begin{proof}
We set $F = Y \otimes Z \otimes X^\dagger$ and define the maps $e : Z \to Y^\dagger \otimes F \otimes X$ and $r : Y^\dagger \otimes F \otimes X \to Z$  by
\begin{align}
e &= \tcoev_Y \otimes 1_Z \otimes \tcoev_X \, ,
\nonumber \\
r &= \big(\diml(Y)^{-1} \otimes 1_Z \otimes \diml(X)^{-1}\big) \circ \big(\ev_Y \otimes 1_Z \otimes \ev_X \big) \, .
\end{align}
Clearly, $r \circ e = 1_Z$.
\end{proof}

\begin{remark}\label{rem:all-direct-summand}
If the Hom-categories of~$\B$ are additive and if idempotent 2-morphisms split, we can improve on Proposition~\ref{prop:all-is-retract} slightly: instead of~$Z$ just being a retract, it now even occurs as a direct summand of $Y^\dagger \otimes F \otimes X$ for a suitable~$F$. To see this, take the maps $e,r$ from the proof, note that $p = e \circ r$ is an idempotent endomorphism of $Y^\dagger \otimes Y \otimes Z \otimes X^\dagger \otimes X$, and consider the decomposition $1 = p + (1-p)$ into orthogonal idempotents.
\end{remark}

\subsection{Matrix factorisations}\label{subsec:MF}

Potentials and (graded) matrix factorisations as defined in Section~\ref{sec:introduction} form bicategories that are `as good as' pivotal,\footnote{One has pivotality `on the nose' if one restricts to potentials in an even number of variables, as we do in our discussion of simple singularities in Section~\ref{subsec:simple-sing}.} 
as explained in detail in \cite{cm1208.1481}. Accordingly we could present the results of this section as applications of a slight variant of the previous one. We will however take a different route and give more direct arguments. 

\medskip

Recall from \cite{cr1006.5609, bfk1105.3177, cm1208.1481} that every (ungraded or graded) matrix factorisation~$X$ of $W(y_1,\ldots,y_n) - V(x_1,\ldots,x_m)$ has left and right adjoints. If we write $X^\vee = \Hom_{k[x,y]}(X,k[x,y])$ and set $d_{X^\vee}(\nu) = (-1)^{|\nu|+1} \nu \circ d_X$ for $\Z_2$-homogeneous $\nu \in X^\vee$, then they are given by $X^\vee[n]$ and $X^\vee[m]$, respectively. These are (ungraded or graded) matrix factorisations of $V-W$, and the associated adjunction maps as in~\eqref{eq:evcoev}, \eqref{eq:evcoevtilde} are known explicitly, see \cite{cm1208.1481} for details or \cite{cm1303.1389} for a concise review. 

Control over adjunctions leads to the explicit expressions for the quantum dimensions given in~\eqref{eq:MF-q-dim}. 
For ungraded matrix factorisations these are polynomials, but in the graded case the quantum dimensions are just numbers: 

\begin{lemma}\label{lem:dimink}
Quantum dimensions of graded matrix factorisations take value in~$k$. 
\end{lemma}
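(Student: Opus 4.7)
The plan is to use the $\mathbb{Q}$-grading on $X$ to track the degree of the residue expression in~\eqref{eq:MF-q-dim} and conclude that it lies in $k$.

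First I would compute the $\mathbb{Q}$-degree of the supertrace. Since $d_X$ has degree $1$ in the $\mathbb{Q}$-grading, its partial derivatives $\partial_{x_i}d_X$ and $\partial_{y_j}d_X$ are $\mathbb{Q}$-homogeneous of degrees $1-|x_i|$ and $1-|y_j|$ respectively. Hence the supertrace
\[
g \;=\; \str\bigl(\partial_{x_1}d_X \cdots \partial_{x_m}d_X\,\partial_{y_1}d_X \cdots \partial_{y_n}d_X\bigr) \;\in\; k[x,y]
\]
is $\mathbb{Q}$-homogeneous of degree $m+n-\sum_i|x_i|-\sum_j|y_j|$.

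Next I would track how the Grothendieck residue interacts with the grading. Factoring through the Jacobian algebra $J_W = k[x,y]/(\partial_{y_1}W,\dots,\partial_{y_n}W)$ viewed as a free $k[x]$-module, the map $\Res_y[\,\cdot\,\D y/(\partial_{y_1}W,\ldots,\partial_{y_n}W)]$ projects onto the socle of $J_W$ in the $y$-direction. Since each $\partial_{y_j}W$ has $\mathbb{Q}$-degree $2-|y_j|$, the socle lies in top $y$-degree $\sum_j(2-2|y_j|)=2n-2\sum_j|y_j|$, so the residue is a graded $k[x]$-linear map shifting the $\mathbb{Q}$-degree by $-(2n-2\sum_j|y_j|)$. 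Combining, $\diml(X)\in k[x]$ is $\mathbb{Q}$-homogeneous of degree
\[
\alpha \;=\; m-n-\sum_i|x_i|+\sum_j|y_j| \;=\; \tfrac{1}{3}\bigl(c(V)-c(W)\bigr),
\]
where the last equality uses~\eqref{eq:def-Vir-central-charge}. By the symmetric computation $\dimr(X)\in k[y]$ is homogeneous of degree $-\alpha$.

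To finish, when $\alpha=0$ the only $\mathbb{Q}$-homogeneous elements of degree zero in $k[x]$ are the scalars, so $\diml(X)\in k$. When $\alpha\neq 0$ the residue polynomial has no constant term; interpreting $\diml(X)$ as a $2$-morphism in the graded bicategory of matrix factorisations --- where the endomorphism algebra of the graded unit $I_V$ is the degree-$0$ part of $J_V$, namely $k$ --- the only admissible value is this (vanishing) constant term. Either way $\diml(X)\in k$, and dually $\dimr(X)\in k$. The main obstacle will be pinning down the precise degree shift of the Grothendieck residue via the socle identification of $J_W$; once that is spelled out, the rest is a direct computation with the central charge formula.
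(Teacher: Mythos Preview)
Your proof is correct and tracks the paper's own argument closely. The paper's primary proof is simply that the adjunction maps of \cite{cm1208.1481} have $\Q$-degree zero, so $\diml(X)$ and $\dimr(X)$ land in the degree-zero part of $\End(I_V)$, which is~$k$; the paper then mentions direct degree counting in~\eqref{eq:MF-q-dim} as an alternative, and that is what you spell out.

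One remark on structure: your case split on~$\alpha$ is a little redundant. For $\alpha\neq 0$ your argument falls back on the bicategorical interpretation (that $\diml(X)$ is a degree-zero endomorphism of the graded unit), and that interpretation already proves the lemma uniformly without any residue degree count. Pure degree counting alone does \emph{not} settle the case $\alpha>0$: it only shows $\diml(X)$ is homogeneous of positive degree, which is in~$k$ only if it vanishes, and vanishing is not forced by homogeneity. So the categorical input is genuinely needed there, and once you use it you may as well use it everywhere. That said, your explicit computation $\alpha=\tfrac13(c(V)-c(W))$ is a pleasant by-product, as it essentially recovers Proposition~\ref{prop:necessary}(ii).
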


\begin{proof}
By construction the adjunction maps of \cite{cm1208.1481} have degree zero for any graded matrix factorisation~$X$. Hence $\diml(X)$ and $\dimr(X)$ must be in~$k$ since the variables have positive degrees and every closed endomorphism of the unit factorisation is homotopy equivalent to a polynomial. Alternatively, one may simply count degrees in the explicit formulas in~\eqref{eq:MF-q-dim}. 
\end{proof}

As a consequence of this lemma, in the graded case the statements ``$X$ has non-zero quantum dimensions'' and ``$X$ has invertible quantum dimensions'' are equivalent. 

\medskip

We may now proceed to prove the results advertised in Section~\ref{subsec:introductionpart1}. 

\begin{proof}[Proof of Theorem~\ref{defthm:MF-orbequiv}]
This is a corollary of \cite[Prop.\,8.5]{cm1208.1481}: reflexivity and symmetry follow analogously to Theorem~\ref{thm:orb-equiv}, and transitivity follows from the fact that the quantum dimensions~\eqref{eq:MF-q-dim} are manifestly multiplicative up to a sign. 
\end{proof}

\begin{proof}[Proof of Proposition~\ref{prop:sum+Kn"orrer}]
From~\eqref{eq:MF-q-dim} it is clear that quantum dimensions are multiplicative (up to a sign) also for the external tensor product $\otimes_k$, showing part~(i). 
Part~(ii) is proven in \cite[Sect.\,7.2]{cr1210.6363}. 
\end{proof}

\begin{proof}[Proof of Proposition~\ref{prop:MF-summand}]
This is a direct consequence of the proof of Proposition~\ref{prop:all-is-retract} and Remark~\ref{rem:all-direct-summand} as we are dealing with idempotent complete matrix factorisation categories. 
\end{proof}

\subsection{Simple singularities}\label{subsec:simple-sing}

Let $W(y_1,y_2)$ be one of the potentials $V^{(\mathrm{D}_{d/2+1})}, V^{(\mathrm{E}_6)}, V^{(\mathrm{E}_7)}, V^{(\mathrm{E}_8)}$ in~\eqref{eq:simple-sing-ADE}, and let $V(x_1,x_2) = V^{(\mathrm{A}_{d-1})} = x_1^d + x_2^2$ be the corresponding A-type potential of the same central charge as~$W$. To avoid too many indices, we  rename
\be
	x_1 \leadsto u \, , \quad
	x_2 \leadsto v \, , \quad
	y_1 \leadsto x \, , \quad
	y_2 \leadsto y \, .
\ee

In this section we will give a finite-rank graded matrix factorisation of $W(x,y)-V(u,v)$ of non-zero left and right quantum dimension in each case, thus proving the results collected in Section~\ref{subsec:orbeqSimSin}. These matrix factorisations have all been constructed along the following lines:
\begin{enumerate}
\item Pick a matrix factorisation $X_0$ of $W(x,y) - v^2$ that is of low rank.
\item Thinking of~$u$ as a deformation parameter, add to each entry of the matrix $d_{X_0}$ the most general homogeneous polynomial of the form $u \cdot p(u,v,x,y)$ with the same total degree as the given entry. Let $d_{X_u}$ be the resulting matrix. 
\item Reduce the number of free parameters in the polynomials~$p$ by absorbing some of them via a similarity transformation $d_{X_u} \mapsto \Phi \circ d_{X_u} \circ \Phi^{-1}$.
\item Try to find a set of parameters (the remaining coefficients in the polynomials~$p$) such that $d_{X_u} \circ d_{X_u} = (W-V) \cdot 1$.
\end{enumerate}
Choosing a low-rank starting point in~(i) and reducing the number of parameters via (iii) only serves to simplify the problem in~(iv). We will work through these steps for $W=V^{(\mathrm{D}_{d/2+1})}$ and $W=V^{(\mathrm{E}_6)}$ in some detail, while our discussion will be briefer for $V^{(\mathrm{E}_7)}$ and $V^{(\mathrm{E}_8)}$.

\subsubsection*{$\boldsymbol{V^{(\mathrm{D}_{d/2+1})} \sim V^{(\mathrm{A}_{d-1})}}$}

We set $b=d/2$, so that $W = x^b + x y^2$ and $V = u^{2b}+v^2$, and we write 
$R = \C[u,v,x,y]$. 
As the starting point in step (i) we choose $X_0 = (X , d_{X_0})$ with $\Z_2$-graded $R$-module $X = R^2 \oplus R^2$ and twisted differential
\be\label{eq:D-calc-X0}
	d_{X_0} = \begin{pmatrix} 0 & d_{X_0}^1 \\ d_{X_0}^0 & 0 \end{pmatrix}
	\quad \text{with} \quad
	d_{X_0}^1 = \begin{pmatrix} x & v \\ v & x^{b-1} + y^2 \end{pmatrix}
	.
\ee
Since $\det(d_{X_0}^1) = x^b + x y^2 - v^2$, the component $d_{X_0}^0$ is determined to be the adjunct matrix, $d_{X_0}^0  = (d_{X_0}^1)^\#$. Recall that the adjunct $M^\#$ of an invertible matrix~$M$ has entries which are polynomial in those of~$M$ and satisfies $M^{-1} = (\det M)^{-1} \cdot M^\#$.

The deformed matrix factorisation $X_u$ has the same underlying $R$-module $R^2 \oplus R^2$. 
For step (ii) we need to pick the most general homogeneous deformation of $d_{X_0}^1$, which is
\be\label{eq:D-calc-ansatz}
	d_{X_u}^1 = \begin{pmatrix} 
	x + u \, p_{11} & v  + u \, p_{12}  \\ v  + u \, p_{21}  & x^{b-1} + y^2  + u \, p_{22} 
	\end{pmatrix}
	\quad \text{where} \quad p_{ij} \in \C[u,v,x,y] \, .
\ee
The degrees of the variables are $|u|=1/b$, $|v|=1$, $|x|=2/b$, and $|y| = 1-1/b$. Hence the total degrees of the $p_{ij}$ have to be
\be
	|p_{11}| = \tfrac1b	\, , \quad
	|p_{12}| = 1-\tfrac1b	\, , \quad
	|p_{21}| = 1-\tfrac1b	\, , \quad
	|p_{22}| = 2-\tfrac3b	 \, .
\ee
From this it follows that $p_{11} = a_1 u$ for some $a_1 \in \C$, but the remaining $p_{ij}$ will contain of the order~$b$ many free coefficients. 

Moving to step (iii), we will now use degree-preserving row and column operations on $d_{X_u}^1$ to reduce the number of free coefficients. By applying the inverse operations to $d_{X_u}^0$ one produces in this way an isomorphic matrix factorisation. The most general such row and column manipulations turn out to be
\be\label{eq:D-calc-ansatz-reduced}
	\begin{pmatrix} 1 & 0 \\ f & 1 \end{pmatrix} d_{X_u}^1 \begin{pmatrix} 1 & g \\ 0 & 1 \end{pmatrix}
	= \begin{pmatrix}
	x + a_1 u^2 & v  + u \, p_{12}  + g (x + a_1 u^2)  \\ v  + u \, p_{21}  + f (x + a_1 u^2) & *
	\end{pmatrix}
\ee
where $|f|=|g|=1-2/b$. We see that $f,g$ can be used to remove any $x$-dependence from $p_{12}$ and $p_{21}$, and we arrive at the following reduced ansatz: $d_{X_u}^1$ of the form \eqref{eq:D-calc-ansatz} with
\begin{align}
	p_{11} &= a_1 u \, , &
	p_{12} &= a_2 y + a_3 u^{b-1} \, , \nonumber \\
	p_{21} &= a_4 y + a_5 u^{b-1} \, ,  &
	p_{22} &= q_1  v + q_2  y + q_3  \, ,
\end{align}
where $a_i \in \C$ and $q_i \in \C[u,x]$ with degrees $|q_1| = 1-3/b$, $|q_2| = 1-2/b$, and $|q_3|=2-3/b$. 

Step (iv) amounts to the tedious task of trying to find conditions such that $d_{X_u} \circ d_{X_u} = (W-V) \cdot 1$.
The second component of the twisted differential is uniquely determined to be $d_{X_u}^0  = q / \det(d_{X_u}^1) \cdot (d_{X_u}^1)^\#$ with $q=x^b + x y^2 - u^{2b} - v^2$, and we need to find values of the deformation parameters so that this matrix has polynomial entries. Since $q \in \C[x,y,u,v]$ is irreducible, either $q$ is a factor of $\det(d_{X_u}^1)$, or $\det(d_{X_u}^1)$ has to cancel against the entries of $(d_{X_u}^1)^\#$. Degree considerations show that the latter is not possible, and in fact $\det(d_{X_u}^1)$ equals~$q$ up to a multiplicative constant. By rescaling $d_{X_u}^1$ if necessary, without restriction of generality we can impose $\det(d_{X_u}^1) = q$.
In solving this condition, one is lead to distinguish between two cases, $a_2=0$ and $a_2 \neq 0$. Setting $a_2=0$ produces a solution with zero left and right quantum dimension. On the other hand, keeping $s := a_2 \neq 0$ forces
\be \label{eq:D-calc-solution}
	d_{X_u}^1 = \begin{pmatrix} 
	x - (su)^2 & v  + y(s u)  \\ v  - y(s u)  & \frac{x^b - (su)^{2b}}{x - (su)^2} + y^2 
	\end{pmatrix}
	\quad \text{where} \quad s^{2b} = 1 \, .
\ee
For $s=1$ this is the solution given in \cite[Sect.\,7.3]{cr1210.6363}. The quantum dimensions are  
\be
	\diml(X_u) = -2 s
	\, , \quad
	\dimr(X_u) = - s^{-1} \, .
\ee

What remains to be done is to check that there exists a $\Q$-grading on $X_u = (X, d_{X_u})$ such that $d_{X_u}$ has $\Q$-degree 1. It is in fact easy to write down all such gradings. The $\Q$-grading on the ring~$R$ is fixed by $|1|=0$, with the variables $u,v,x,y$ having degrees as stated below \eqref{eq:D-calc-ansatz}. Writing $R[\alpha]$, $\alpha \in \Q$, for~$R$ with $|1|=\alpha$, the possible gradings on~$X$ are
\be
% degree calculation :
%	d1 :	2/b	 1
%		1	 2-2/b	
%	d0 :	2-2/b 1
%		1	 2/b
	X^0 = R[\alpha] \oplus R[\alpha-1+\tfrac2b] \, , \quad
	X^1 = R[\alpha-1+\tfrac2b] \oplus R[\alpha]
	\, , \quad \alpha \in \Q \, .
\ee
Equivalently, the grading data on~$X$ may be presented as a `$U$-matrix',  cf.~e.\,g.~\cite{cr1006.5609}, 
\be
U_{X}(\beta) = 
\E^{\I \alpha \beta} \cdot 
\operatorname{diag} \left( 1, \, \E^{(2/b - 1)\I \beta}, \, \E^{(2/b - 1)\I \beta} , \, 1\right)
.
\ee

This completes the proof that the potentials $V^{(\mathrm{A}_{d-1})}$ and $V^{(\mathrm{D}_{d/2+1})}$ are orbifold equivalent. We can summarise the above discussion as follows.

\begin{lemma}\label{lem:D-calc-summary}
Let~$Y$ be a rank-two $\Q$-graded matrix factorisation of $x^b + x y^2 - u^{2b} - v^2$. Suppose that~(i) $Y$ has non-zero quantum dimensions and~(ii) when setting~$u$ to zero, $Y$ is equal to $X_0$ in~\eqref{eq:D-calc-X0}. Then~$Y$ is isomorphic in $\hmfgr(\C[u,v,x,y],x^b + x y^2 - u^{2b} - v^2)$ to $X_u$ in~\eqref{eq:D-calc-solution} for some choice of~$s$.
\end{lemma}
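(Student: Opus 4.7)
The plan is to retrace steps (i)--(iv) of the construction in reverse, showing that the gauge freedom together with hypothesis~(i) pins down $d_Y$ up to the ambiguity in~$s$. Hypothesis~(ii) lets me identify the underlying $\Z_2$-graded module of~$Y$ with $R^2 \oplus R^2$ so that $d_Y|_{u=0} = d_{X_0}$. Writing $d_Y$ in block form with off-diagonal entries $d_Y^0, d_Y^1$, I then have $d_Y^1 = d_{X_0}^1 + u \cdot P$ for some $2\times 2$ matrix $P$ over $R$; this is precisely the ansatz~\eqref{eq:D-calc-ansatz}. The $\Q$-grading on~$Y$ together with the requirement that $d_Y$ have $\Q$-degree~$1$ forces the $p_{ij}$ to be homogeneous of the degrees computed in the main text, so that $p_{11} = a_1 u$ and each of $p_{12}, p_{21}, p_{22}$ depends on only finitely many scalar parameters.

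Next I would use gauge transformations to reduce the parameter count. Conjugation of $d_Y$ by a $\Q$-degree-zero block-diagonal automorphism of $R^2 \oplus R^2$ yields an isomorphic graded matrix factorisation; applied as in~\eqref{eq:D-calc-ansatz-reduced}, such a conjugation by unipotent factors with entries $f,g$ of degree $1 - 2/b$ removes exactly the $x$-dependence of $p_{12}$ and $p_{21}$. After this reduction $d_Y^1$ has the explicit shape recorded before~\eqref{eq:D-calc-solution}, depending on five scalars $a_1, \ldots, a_5$ together with polynomials $q_1, q_2, q_3 \in \C[u,x]$ of the stated degrees.

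Now I would impose the matrix-factorisation equation $d_Y^1 \cdot d_Y^0 = (W-V) \cdot I_2$. Since $W - V = x^b + xy^2 - u^{2b} - v^2$ is irreducible in $R$ and $\det(d_Y^1)|_{u=0} = \det(d_{X_0}^1) = (W-V)|_{u=0}$, the identity $\det(d_Y^1) = W - V$ must hold on the nose, forcing $d_Y^0 = (d_Y^1)^\#$. Expanding $\det(d_Y^1) = W - V$ and comparing coefficients of monomials in $v, y, x, u$ produces a finite polynomial system in the parameters $a_i$ and in the coefficients of the $q_j$. The system splits according to whether $a_2 = 0$ or $a_2 \neq 0$; in the latter case, writing $s := a_2$ and solving recursively determines every remaining parameter and imposes $s^{2b} = 1$, reproducing~\eqref{eq:D-calc-solution}.

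The main obstacle is excluding the $a_2 = 0$ branch by means of hypothesis~(i): one must verify that every member of the $a_2 = 0$ sub-family has vanishing left or right quantum dimension. Once this sub-family has been parametrised explicitly from $\det(d_Y^1) = W - V$ with $a_2 = 0$, this amounts to a direct substitution into the residue formula~\eqref{eq:MF-q-dim}, the supertrace numerator turning out to lie in the ideal generated by the partial derivatives of $W$ (or of $V$) so that the residue is zero. A minor bookkeeping point is that the $\Q$-grading on~$Y$ carries a shift ambiguity parametrised by $\alpha \in \Q$; since $X_u$ admits the same one-parameter family of gradings, any grading-preserving isomorphism at one value of $\alpha$ extends to all, so the isomorphism produced by the above analysis is genuinely an isomorphism in $\hmfgr$.
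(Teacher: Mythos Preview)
Your proposal is correct and follows essentially the same approach as the paper: the lemma is stated precisely as a summary of the steps (i)--(iv) carried out in the text preceding it, and you have reproduced those steps faithfully. Your argument that $\det(d_Y^1) = W-V$ holds without rescaling, because the $u=0$ specialisation already fixes the constant to~$1$, is in fact slightly cleaner than the paper's phrasing (which speaks of rescaling $d_{X_u}^1$, a move not available once~$Y$ is fixed by hypothesis~(ii)).
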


Let us analyse the matrix factorisation $X_u$ a bit further. 
First we recall the definition of the permutation matrix factorisations~$P_S$ from \cite{br0707.0922}, which are factorisations of $u'^{\,d} - u^{\,d}$. For a subset $S \subset \Z_d$ we denote by~$S^{\textrm{c}}$ its complement in~$\Z_d$. Then one has $P_S = (\C[u,u'] \oplus \C[u,u'] , d_{P_S})$ with
\be\label{eq:P_S-def}
	d_{P_S}^1 = \prod_{l \in S} (u' - \zeta_d^l u) \, , \quad
	d_{P_S}^0 = \prod_{l \in S^{\textrm{c}}} (u' - \zeta_d^l u) \, , \quad \text{where} \quad
	\zeta_d = \E^{2 \pi \I/d} 
\ee
and the dependence on~$d$ should be clear from the context. 
The left and right quantum dimensions were computed in \cite[Sect.\,3.3]{cr1006.5609} to be
\be
  \diml(P_S) = \sum_{l \in S} \zeta_d^l 
  \, , \quad 
  \dimr(P_S) = \sum_{l \in S} \zeta_d^{-l} \, . 
\ee

The automorphism~$\sigma$ of $\C[u]$ determined by $\sigma(u) = \zeta_d u$ leaves the potential $u^d$ invariant. 
Given a matrix factorisation~$X$ of $W - u^d$ for some $W \in \C[x]$, twisting the $\C[u,x]$-action on~$X$ by $\sigma^l$ (with $\sigma$ extended to $\C[u,x]$ via $\sigma(x_i)=x_i$) results again in a matrix factorisation of $W - u^d$ which we denote by $X_{\sigma^l}$. It follows from \cite[Lem.\,2.10]{cr0909.4381} that
\be\label{eq:P_{-k}-action}
	X_{\sigma^l} \cong X \otimes P_{\{-l\}} 
\ee
in $\hmfgr(\C[u,x], W-u^d)$. 

\begin{remark}\label{rem:D-calc-s-choice}
The observation \eqref{eq:P_{-k}-action} together with the classification given in Lemma~\ref{lem:D-calc-summary} explains the $2b$-th root of unity~$s$ appearing as a parameter in \eqref{eq:D-calc-solution}. To wit, given any 1-morphism $P : V \to V$ of non-zero quantum dimensions, the composition $X_u \otimes P : V \to W$ also has non-zero quantum dimensions. Consider the choice $P = P_{\{l\}} \otimes_{\C} I_{v^2}$, where $I_{v^2}$ the unit 1-morphism for the potential~$v^2$ with twisted differential $(\begin{smallmatrix} 0 & v'-v \\ v'+v & 0 \end{smallmatrix})$. 
Then~$P$ is a 1-endomorphism of $u^{2b}+v^2$ and $X_u \otimes P$ is again a rank-two factorisation satisfying the conditions in Lemma~\ref{lem:D-calc-summary}. Hence $X_u \otimes P$ must be isomorphic to~$X_u$ for a possibly different choice of~$s$. But different choices of~$s$ precisely amount to twisting the $u$-action by some power of the automorphism~$\sigma$.
\end{remark}

It was checked in \cite[Sect.\,7.3]{cr1210.6363} that for $d\in \{ 2,3,\ldots,10 \}$, and up to a trivial factor of the unit $I_{v^2}$ the monoid $X^\dagger_{u'} \otimes X_u$ is isomorphic to
\be\label{eq:PPinDcase}
P_{\{0\}} \oplus P_{\{0,1,\ldots,d-1\} \backslash \{\frac{d}{2}\}} \, . 
\ee
It is straightforward to compute an explicit basis for the endomorphisms of~$X_u$ in the homotopy category, e.\,g.~by using the Singular code of \cite{cdr1112.3352}. We have done so for small values of~$d$, with the result 
\be\label{eq:EndXDcase}
\End
(X_u) 
\cong 
\Big(
\bigoplus_{i=0}^{d-2} \C_{i\cdot \frac{2}{d}}
\Big)
\oplus 
\C_{\frac{d-2}{d}}
\ee
where~$\C_{j}$ denotes the one-dimensional subspace of maps of $\Q$-degree~$j$. 
Both~\eqref{eq:PPinDcase} and~\eqref{eq:EndXDcase} are expected to be the correct expressions for all~$d$.

\subsubsection*{$\boldsymbol{V^{(\mathrm{E}_6)} \sim V^{(\mathrm{A}_{11})}}$}

Our starting point in step~(i) now is the matrix factorisation $X_0 = (R^2 \oplus R^2 , d_{X_0})$ of $x^3 + y^4 - v^2$ 
with $R = \C[u,v,x,y]$ and twisted differential
\be\label{eq:E6-calc-X0}
	d_{X_0}^1 = \begin{pmatrix} y^2-v & -x \\ x^2 & y^2+v \end{pmatrix}
	 , \quad
	d_{X_0}^0  = (d_{X_0}^1)^\#
	\, .
\ee
This is one of the six indecomposable objects of $\hmfgr(\C[v,x,y],V^{(\mathrm{E}_6)} - v^2)$ listed in \cite[Sect.\,5]{kst0511155}. The variable degrees are
\be
	|u| = \tfrac16
	\, , \quad
	|v| = \tfrac66
	\, , \quad
	|y| = \tfrac36
	\, , \quad
	|x| = \tfrac46 
	\, .
\ee

Carrying out steps~(ii) and (iii) leads to the possibility
\be
	d_{X_u}^1 = \begin{pmatrix} 
	y^2-v + a_1 x u^2 + a_2 u^6 & 
	-x + a_3 y u + a_4 u^4 \\ 
	x^2 + a_5 y x u +  a_6 x u^4 + a_7 v u^2 + q & 
	y^2+v + a_8 x u^2 + a_9 u^6
	\end{pmatrix} 
\ee
for the reduced ansatz, where $a_i \in \C$ and $q \in \C[u,y]$ with $|q| = 8/6$. Of course a different choice of similarity transformation may give a different (but isomorphic) reduced ansatz. Here, the row and column manipulations were used to absorb the terms $y u^3$ in the diagonal entries.

By the same argument as used in the D-case, we now need to solve 
the condition $\det(d_{X_u}^1)= x^3 + y^4 - u^{12} - v^2$ under the extra constraint that $X_u$ has non-zero quantum dimensions. This leads to 
\begin{align} \label{eq:E6-calc-solution}
d_{X_u}^1 = \big(\begin{smallmatrix} a&b \\ c & d \end{smallmatrix}\big) \quad \text{with} \quad 
	a &= y^2-v + \tfrac12 x (s u)^2 + \tfrac{2t+1}8  (s u)^6   \, ,
	\nonumber \\
	b &= -x + y (s u) + \tfrac{t+1}4  (s u)^4   \, ,
	\nonumber \\
	c &= x^2 + y x (s u) +   \tfrac{t}4 x (su)^4 + \tfrac{2t+1}4 y (su)^5
	 - \tfrac{9t+5}{48} (su)^8   \, ,
	\nonumber \\
	d &= y^2+v + \tfrac12 x (s u)^2 + \tfrac{2t+1}8 (s u)^6 \, ,
\end{align}
and $d_{X_0}^0  = (d_{X_0}^1)^\#$. Here, $s$ and~$t$ can be any solution of
\be\label{eq:stE6}
	t^2  = \tfrac13
	\, , \quad	
	s^{12} = - 576 \, (26 \, t -15) \, .
\ee
Note that~$s$ can be modified by a 12-th root of unity -- the interpretation of this is as in Remark~\ref{rem:D-calc-s-choice}.

The quantum dimensions of $X_u$ are
\be
	\diml(X_u) = s
	\, , \quad
	\dimr(X_u) = 3 \, (1 - t) s^{-1}
	 \, ,
\ee
and all possible $\Q$-gradings on $X_u$ are again easily found: the underlying $\Z_2$-graded $R$-module $X = X_0 \oplus X_1$ has components with $\Q$-grading
\be
% degree calculation :
%	d1 :	6/6	 4/6
%		8/6	 6/6	
%	d0 :	6/6	 4/6
%		8/6	 6/6	
	X^0 = R[\alpha] \oplus R[\alpha-\tfrac13] \, , \quad
	X^1 = R[\alpha] \oplus R[\alpha-\tfrac13]
	\, , \quad \alpha \in \Q \, ,
\ee
equivalently described by the $U$-matrix
\be
U_{X_u}(\beta) = 
\E^{\I \alpha \beta} \cdot 
\operatorname{diag} \left( 1, \, \E^{-\I \beta/3},\, 1,\, \E^{-\I \beta/3} \right)
.
\ee

As in the D-case, we can summarise the above as:

\begin{lemma}\label{lem:E6-calc-summary}
Let~$Y$ be a rank-two $\Q$-graded matrix factorisation of $x^3 + y^4 - u^{12} - v^2$. Suppose that (i)~$Y$ has non-zero quantum dimensions and (ii) when setting~$u$ to zero, $Y$ is equal to $X_0$ in~\eqref{eq:E6-calc-X0}. Then~$Y$ is isomorphic in $\hmfgr(\C[u,v,x,y],x^3 + y^4 - u^{12} - v^2)$ to $X_u$ in~\eqref{eq:E6-calc-solution} for some solution $s,t$ of~\eqref{eq:stE6}.
\end{lemma}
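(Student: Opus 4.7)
The plan is to retrace the four construction steps (i)--(iv) from the start of Section~\ref{subsec:simple-sing} and argue that each is forced by the hypotheses up to isomorphism. Since $Y$ restricts to $X_0$ in~\eqref{eq:E6-calc-X0} at $u=0$, the underlying $\Z_2$-graded free module of $Y$ is $R^2 \oplus R^2$ and the twisted differential can be written as $d_Y = d_{X_0} + u\Delta$ for some off-diagonal $\Delta$ with entries in $R = \C[u,v,x,y]$. The $\Q$-grading on $Y$ is essentially pinned down by the grading on $X_0$: matching the entries of $d_{X_0}^1$ in~\eqref{eq:E6-calc-X0}, one gets the one-parameter family of gradings on $X$ shown in the text, and homogeneity of $d_Y^1$ of the correct degree forces each entry to have exactly the degree of the corresponding entry of $d_{X_0}^1$. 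This restricts each entry of $\Delta$ to a finite list of monomials in $u,v,x,y$, producing the general ansatz of step~(ii).

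Next, carrying out step~(iii), one uses degree-preserving row and column operations -- which implement isomorphisms of matrix factorisations -- to absorb redundant coefficients in~$\Delta$. In the E$_6$ case the available manipulations have degrees compatible only with cancelling the $y u^3$ terms on the diagonal (plus some overall rescalings), leading to exactly the nine-parameter reduced ansatz with auxiliary polynomial $q \in \C[u,y]$ of degree $8/6$ displayed in the text. Because $Y$ is of rank two, the second component $d_Y^0$ is determined by $d_Y^1$ via $d_Y^0 = (W-V)\,\det(d_Y^1)^{-1}\,(d_Y^1)^{\#}$; for this to have polynomial entries, and because $W - V = x^3+y^4-u^{12}-v^2$ is irreducible in $R$ while a degree count rules out cancellations against $(d_Y^1)^{\#}$, the determinant $\det(d_Y^1)$ must equal $W-V$ up to a non-zero scalar. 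Rescaling one row of $d_Y^1$ -- another matrix factorisation isomorphism -- reduces this to the genuine equation $\det(d_Y^1) = W - V$.

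It remains to solve this determinant equation in the nine scalar unknowns $a_i$ together with the coefficients of~$q$, and then impose non-vanishing of the quantum dimensions~\eqref{eq:MF-q-dim}. This is step~(iv) and is the main obstacle: expanding $\det(d_Y^1)$ gives a system of polynomial equations obtained by matching coefficients of the monomials $x^3, x^2 y^2, x y^4, y^4, x u^k, u^{12}, \ldots$ on the two sides. The grading makes this system tractable by stratifying the unknowns: the coefficients of $y^{4}$, $x^{3}$, and the mixed $x^2 u^k$, $xy u^k$ pieces determine several $a_i$ sequentially, while the residual relations among the remaining free parameters collapse to a two-variable system whose only solutions with nonzero quantum dimensions are those parametrised by the pair $(s,t)$ satisfying~\eqref{eq:stE6}. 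This gives~\eqref{eq:E6-calc-solution} and completes the classification. The residual freedom of a twelfth root of unity in~$s$ corresponds, as in Remark~\ref{rem:D-calc-s-choice}, to tensoring with $P_{\{l\}} \otimes_{\C} I_{v^2}$, and is implicit in the phrase ``for some solution $s,t$'' in the lemma.
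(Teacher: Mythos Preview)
Your proposal is correct and follows essentially the same approach as the paper: the lemma is stated as a summary of the four-step construction carried out in the text immediately preceding it, and you faithfully recapitulate those steps (general homogeneous deformation of $X_0$, reduction via degree-preserving similarity transformations absorbing the diagonal $yu^3$ terms, the irreducibility-plus-degree argument forcing $\det(d_Y^1)=W-V$, and the final determinant computation under the nonzero-quantum-dimension constraint). Your account is in places slightly more explicit about the logical flow than the paper's terse ``This leads to'', but the substance is identical.
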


\medskip

Computing the endomorphism of~$X_u$ in the homotopy category one finds the 16-dimensional space 
\be\label{eq:EndhmfX-E6}
\End
(X_u) 
\cong 
\Big(
\bigoplus_{i=0}^{10} \C_{i\cdot \frac{2}{12}}
\Big)
\oplus 
\Big(
\bigoplus_{i=3}^{7} \C_{i\cdot \frac{2}{12}}
\Big) \, .
\ee

As mentioned in Section~\ref{subsec:orbeqSimSin} and explained in detail in \cite{cr1210.6363}, a matrix factorisation~$X$ of $W(y)-V(x)$ with invertible quantum dimensions allows us to describe \textsl{all} matrix factorisations of~$W$ in terms of modules over $A := X^\dagger \otimes X \in \hmfgr(k[x,x'], V(x)-V(x'))$. The matrix $d_{X^\dagger \otimes X}$ also depends on the $y$-variables, and hence the matrix factorisation~$A$ is of infinite rank. However, by the results of \cite{dm1102.2957} it is homotopy equivalent (and thus isomorphic in $\hmfgr(k[x,x'], V(x)-V(x'))$) to a finite-rank matrix factorisation. 

The construction of this finite-rank factorisation and the explicit homotopy equivalence can be implemented on a computer; this was done in \cite{khovhompaper}, where it was used to compute Khovanov-Rozansky link invariants. In our present situation we can use this implementation to find that $X_{u'}^\dagger \otimes X_u$ is equivalent to the matrix factorisation $A'\otimes_k I_{v^2}$, where the twisted differential of~$A'$ is represented by a 4-by-4 matrix~$a'$ with nonzero entries
\begin{align}
a'_{13}& = (\tfrac{76}{3}-44 t) u^8+(\tfrac{136}{3}-80 t) u' u^7+(44-76 t) u'^2 u^6+(\tfrac{152}{3}-88 t) u'^3 u^5 \nonumber \\ 
& \qquad +(\tfrac{208}{3}-120 t) u'^4 u^4+(\tfrac{152}{3}-88 t) u'^5 u^3+(20 t-12) u'^6 u^2 \nonumber \\ 
& \qquad +(88 t-\tfrac{152}{3}) u'^7 u+(52 t-\tfrac{92}{3}) u'^8 \nonumber \, , \\
a'_{14} & = (224-384 t) u^5 +(832-1440 t) u' u^4+(1440-2496 t) u'^2 u^3 \nonumber \\ 
& \qquad +(1440-2496 t) u'^3 u^2+(832-1440 t) u'^4 u+(224-384 t) u'^5 \nonumber \, , \\
a'_{23} & =  (\tfrac{5}{3}-\tfrac{19 t}{6}) u^{11}+(\tfrac{49}{6}-\tfrac{43 t}{3}) u' u^{10}+(\tfrac{287}{18}-28 t) u'^2 u^9+(\tfrac{361}{18}-35 t) u'^3 u^8 \nonumber \\ 
& \qquad +(\tfrac{208}{9}-\tfrac{241 t}{6}) u'^4 u^7+(\tfrac{475}{18}-46 t) u'^5 u^6+(\tfrac{64}{3}-\tfrac{223 t}{6}) u'^6 u^5+(\tfrac{23}{6}-7 t) u'^7 u^4 \nonumber \\ 
& \qquad +(\tfrac{65 t}{3}-\tfrac{227}{18}) u'^8 u^3+(\tfrac{74 t}{3}-\tfrac{259}{18}) u'^9 u^2+(\tfrac{65 t}{6}-\tfrac{58}{9}) u'^{10} u+(\tfrac{5 t}{3}-\tfrac{19}{18}) u'^{11} \nonumber \, , \\
a'_{24}& =  (16-28 t) u^8+(104-180 t) u' u^7+(\tfrac{908}{3}-524 t) u'^2 u^6+(524-908 t) u'^3 u^5 \nonumber \\ 
& \qquad +(\tfrac{1780}{3}-1028 t) u'^4 u^4+(448-776 t) u'^5 u^3+(\tfrac{652}{3}-376 t) u'^6 u^2 \nonumber \\ 
& \qquad +(60-104 t) u'^7 u+(\tfrac{20}{3}-12 t) u'^8 \nonumber \, , \\
a'_{31}& = \tfrac{3 t u^4}{4}-\tfrac{3 u' u^3}{4}+\tfrac{u'^2 u^2}{4}+\tfrac{3 u'^3 u}{4}+(-\tfrac{3 t}{4}-\tfrac{1}{4}) u'^4 \nonumber \, , \\
a'_{32}& =  6 u-6 u'\nonumber \, , \\
a'_{41}& =  (-\tfrac{13 t}{32}-\tfrac{3}{16}) u^7+(\tfrac{7 t}{32} +\tfrac{1}{8}) u' u^6+(-\tfrac{13 t}{32}-\tfrac{31}{96}) u'^2 u^5+(\tfrac{3 t}{32}+\tfrac{19}{96}) u'^3 u^4 \nonumber \\ 
& \qquad +(\tfrac{t}{8}+\tfrac{1}{48}) u'^4 u^3+(\tfrac{t}{16}-\tfrac{1}{12}) u'^5 u^2+(\tfrac{t}{8}+\tfrac{11}{96}) u'^6 u+(\tfrac{3 t}{16}+\tfrac{13}{96}) u'^7 \nonumber \, , \\
a'_{42}& = (\tfrac{3 t}{4}-\tfrac{1}{4}) u^4+2 u' u^3-3 u'^2 u^2+\tfrac{5 u'^3 u}{2}+(-\tfrac{3 t}{4}-\tfrac{5}{4}) u'^4 
\, .
\end{align}
This is a graded matrix factorisation when accompanied by the $U$-matrix
\be
U_{A'}(\beta) =
\operatorname{diag} \left(1, \, \E^{-\I \beta/2}, \, \E^{\I \beta/3}, \, \E^{-\I \beta/6} \right)
. 
\ee 

In anticipation of the comparison to conformal field theory in Section \ref{sec:CFTcomparison} we single out one of the roots of \eqref{eq:stE6},
\be
	t_{\text{cft}} = -1/\sqrt{3} \, .
\ee
Choosing $t=t_\text{cft}$ and performing a series of row and column manipulations produces the isomorphism 
\be\label{eq:AMF-E6}
\begin{pmatrix}
\phi & 6 & 0 & 0 \\
-\tfrac{1}{32} (12t+7) & 0 & 0 & 0 \\
0 & 0 & 1 & 0 \\
0 & 0 & \psi & 1
\end{pmatrix}
: 
A' \lra 
P_{\{0\}} \oplus P_{\{-3,-2,\ldots,3\}} 
\ee
where $\phi=\frac{1}{4} ( u+u' ) ( 3 t u^2 -3uu'+u'^2+3t u'^2 )$ and $\psi=\frac{1}{24} ( u^3-3t u^3-7 u^2 u' -3 t ^2 u'+5 u u'^2-3 t u u'^2-5 u'^3-3 t u'^3 )$. 
This proves the first third of Corollary~\ref{cor:Etypemod}. 

\begin{remark}\label{rem:galE6}
One may wonder what 
the monoid 
$X_{u'}^\dagger \otimes X_u$ 
reduces to for the other solution $t = 1/\sqrt{3}$ of~\eqref{eq:stE6}, i.\,e.~what is the equivalent to the right-hand side of~\eqref{eq:AMF-E6}. As we will see momentarily, the other solution can be related to $t_\text{cft}$ via the action of an appropriate Galois group. 

Define $\zeta_d = \E^{2 \pi \I /d}$ and consider the cyclotomic field $k = \Q(\zeta_d)$, i.\,e.~the field obtained from~$\Q$ by adjoining the $d$-th primitive root of unity~$\zeta_d$. The Galois group is isomorphic to the group of units in~$\Z_d$, $\mathrm{Gal}(k/\Q) \cong \Z_d^\times$. Given $\nu \in \Z_d^\times$ the action of the corresponding Galois group element $\sigma_\nu$ is $\sigma_\nu(\zeta_d^a) = \zeta_d^{\nu a}$.

Let now $V \in \Q[x]$ be a potential with rational coefficients and let~$M$ be a finite-rank matrix factorisation of~$V$ over~$k[x]$. We may take $M = (k[x]^{2r}, d_M)$, where~$d_M$ is a matrix with entries in~$k[x]$. Let $\sigma \in \mathrm{Gal}(k/\Q)$ be an element of the Galois group and denote by $\sigma(d_M)$ the matrix obtained by applying~$\sigma$ to each entry. Since $\sigma(V) = V$, $\sigma(d_M)$ is still a factorisation of~$V$, and we set $\sigma(M) = (k[x]^{2r}, \sigma(d_M))$. Analogously, if $f : M \to N$ is a morphism with entries in $k[x]$, then $\sigma(f)$ is a morphism from $\sigma(M)$ to $\sigma(N)$.

Let us apply this to the isomorphism in \eqref{eq:AMF-E6}. Write $A'(t)$ for~$A'$ to highlight the $t$-dependence. We choose $k = \Q(\zeta_{12})$ so that all $P_S$ are matrix factorisations over $k[u,u']$. Since
\be
	t_\text{cft} = -\tfrac13 \big( \zeta_{12} + \zeta_{12}^{-1} \big) \, , 
\ee
also $A'(t_\text{cft})$ has entries with coefficients in~$k$. The same holds for the isomorphism~\eqref{eq:AMF-E6}, and so we get an isomorphism from $\sigma(A'(t_\text{cft}))$ to $\sigma(P_{\{0\}} \oplus P_{\{-3,-2,\ldots,3\}})$. 
Since the entries of $A'(t)$ are polynomials in $u,u',t$ with rational coefficients, we have $\sigma(A'(t_\text{cft})) = A'(\sigma(t_\text{cft})))$, and $\sigma(P_S) = P_{\sigma_*(S)}$, where $\sigma_*$ is the permutation of $\Z_{12}$ induced by the action of~$\sigma$ on the $12$-th roots of unity.

It turns out that the orbit of $t_\text{cft}$ under $\mathrm{Gal}(k/\Q)$ covers all, namely both, roots of~\eqref{eq:stE6}: we have $\sigma_5(t_\text{cft}) = -\tfrac13 ( \zeta^5_{12} + \zeta_{12}^{-5} ) = 1/\sqrt{3}$ and $\sigma_{5*}(\{-3,-2,\dots,3\}) = \{ -5,-3,-2,0,2,3,5 \}$ and so we obtain the isomorphism
\be
	X^\dagger_{u'} \otimes X_u 
	\cong
	\big( P_{\{0\}} \oplus P_{\{-5,-3,-2,0,2,3,5\}} \big) 
	\otimes_k I_{v^2}
	\quad \text{for} \quad 
	t  = \sigma_5(t_\text{cft}) \, .
\ee
Note that by construction $A'(t_\text{cft})$ and $A'(\sigma_5(t_\text{cft}))$ are Morita equivalent, i.\,e.~the category $\modu(X^\dagger_{u'} \otimes X_u)$ does not depend on the choice of solution~$t$. The situation is analogous for the $\text{E}_7$- and $\text{E}_8$-singularities, cf.~Remarks~\ref{rem:galE7} and~\ref{rem:galE8}. 
\end{remark}

\subsubsection*{$\boldsymbol{V^{(\mathrm{E}_7)} \sim V^{(\mathrm{A}_{17})}}$}

In step~(i) we pick the matrix factorisation $X_0 = (R^2 \oplus R^2 , d_{X_0})$ of $x^3 + x y^3  - v^2$ with twisted differential
\be\label{eq:E7-calc-X0}
	d_{X_0}^1 = \begin{pmatrix} v & -x \\ x^2+y^3 & -v \end{pmatrix}
	\, , \quad
	d_{X_0}^0  = (d_{X_0}^1)^\#
	\, ,
\ee
one of the seven indecomposable objects of $\hmfgr(\C[v,x,y],V^{(\mathrm{E}_7)} - v^2)$ listed in \cite{kst0511155}. 
In step (iii) we again choose a similarity transformation that removes the term $x u^3$ in the diagonal entries. The result of step (iv) reads
\begin{align} \label{eq:E7-calc-solution}
d_{X_u}^1 = \big(\begin{smallmatrix} a&b \\ c & d \end{smallmatrix}\big) \quad \text{with} \quad 
	a &= v -\tfrac{t^2- 10 t + 19}2 \, (s u)^9  + (t{-}2) \, y (s u)^5  + y^2 (s u) \, ,
	\nonumber \\
	b &= -x + (2 t{-}5) \, (s u)^6 + y (s u)^2  \, ,
	\nonumber \\
	c &= x^2+y^3 + (2 t{-}5)^2 \, (s u)^{12} + (2 t{-}5) \, x (s u)^6  
	\nonumber \\
	& \qquad\quad + 2 (2 t{-}5) \, y  (s u)^8 + x y (s u)^2 + y^2 (s u)^4  \, ,
	\nonumber \\
	d &=  -v - \tfrac{t^2- 10 t + 19}2 \, (s u)^9  + (t{-}2) \, y (s u)^5  + y^2 (s u)
\end{align}
and $d_{X_0}^0  = (d_{X_0}^1)^\#$. This time~$s$ and~$t$ can be any solution of
\be\label{eq:stE7}
	t^3 - 21 \, t + 37 = 0
	\, , \quad	
	s^{18} = 26220 \, t^2 + 67488 \, t - 376912  
	\, .
\ee

We find that 
\be
	\diml(X_u) = -2s
	\, , \quad
	\dimr(X_u) = (-30 + 5 \, t + 2 \, t^2) s^{-1}
	 \, ,
\ee
and the the possible $\Q$-gradings on~$X_u$ are given by
\be
U_{X_u} (\beta)= 
\E^{\I \alpha \beta} \cdot 
\operatorname{diag} \left( 1, \, \E^{-\I \beta/3}, \, 1, \, \E^{-\I \beta/3} \right)
\quad \text{with} \quad 
\alpha \in \Q
\, .
\ee

\begin{lemma}\label{lem:E7-calc-summary}
Let~$Y$ be a rank-three $\Q$-graded matrix factorisation of $x^3 + xy^3 - u^{18} - v^2$. Suppose that (i)~$Y$ has non-zero quantum dimensions and (ii) when setting~$u$ to zero, $Y$ is equal to $X_0$ in~\eqref{eq:E7-calc-X0}. Then~$Y$ is isomorphic in $\hmfgr(\C[u,v,x,y],x^3 + xy^3 - u^{18} - v^2)$ to $X_u$ in~\eqref{eq:E7-calc-solution} for some solution $s,t$ of~\eqref{eq:stE7}.
\end{lemma}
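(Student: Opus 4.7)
The plan is to follow verbatim the four-step recipe that was executed explicitly for the D- and $\mathrm{E}_6$-cases; the content will differ only in degree bookkeeping and in the complexity of the final polynomial system.

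First, I would fix the variable degrees $|u|=\tfrac{1}{9}$, $|v|=1$, $|x|=\tfrac{2}{3}$, $|y|=\tfrac{4}{9}$ and write down the most general homogeneous $u$-deformation of $X_0$ from~\eqref{eq:E7-calc-X0}, replacing each entry of $d^1_{X_0}$ by itself plus a term $u\cdot p_{ij}(u,v,x,y)$, where $p_{ij}\in\C[u,v,x,y]$ ranges over the finite-dimensional space of homogeneous polynomials matching the degree of the corresponding entry. Hypothesis~(ii) forces the undeformed part to be literally $d^1_{X_0}$, so no degeneracy analysis is needed at $u=0$.

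Second, I would exploit the similarity transformations $d^1_{X_u}\mapsto A\, d^1_{X_u}\, B$ by degree-zero unipotent triangular $A,B$ (producing an isomorphic factorisation) to absorb as many free coefficients as possible. Following the same normalisation as in the $\mathrm{E}_6$-case, I would use this freedom to kill the $x u^3$-contributions on the diagonal. The outcome is a reduced ansatz depending on finitely many scalars $a_i\in\C$ together with small polynomial remainders in $\C[u,y]$ constrained by the degree of each entry.

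Third -- and this is the main obstacle -- I would impose $\det(d^1_{X_u}) = x^3 + x y^3 - u^{18} - v^2$. As in the earlier cases, irreducibility of the right-hand side together with matching total degree shows that this is equivalent, up to an overall scalar rescaling of $d^1_{X_u}$, to $d_{X_u}\circ d_{X_u} = (W-V)\cdot 1$. Equating coefficients monomial by monomial produces a polynomial system in the reduced parameters. This system splits into a branch with vanishing quantum dimensions (ruled out by hypothesis~(i)) and a branch with non-vanishing quantum dimensions; the latter gives~\eqref{eq:E7-calc-solution} subject to the two constraints in~\eqref{eq:stE7}. The genuine difficulty here is that, compared with $\mathrm{E}_6$, the ansatz admits many more admissible monomials, and the constraint on the auxiliary parameter is now the cubic $t^3-21 t + 37 = 0$ rather than a quadratic; the resolution will need careful (computer-assisted) elimination, but no new conceptual ingredient.

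Finally, I would verify that the resulting $d_{X_u}$ is $\Q$-graded of degree~$1$. Comparing the degree-1 entries of $d^1_{X_u}$ with those of $d^0_{X_u}=(d^1_{X_u})^\#$ forces the shift pattern recorded by the $U$-matrix $U_{X_u}(\beta)=\E^{\I\alpha\beta}\cdot\operatorname{diag}(1,\E^{-\I\beta/3},1,\E^{-\I\beta/3})$ with $\alpha\in\Q$, and a direct check shows that this grading is indeed consistent with every entry of~\eqref{eq:E7-calc-solution}, completing the classification.
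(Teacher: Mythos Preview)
Your proposal is correct and follows exactly the paper's approach: the ``proof'' of Lemma~\ref{lem:E7-calc-summary} is nothing but the discussion immediately preceding it, namely the four-step deformation procedure (general homogeneous $u$-deformation of $d^1_{X_0}$, reduction via similarity transformations that in particular kill the diagonal $xu^3$-terms, the determinant condition forced by irreducibility of $W-V$ and degree matching, and finally reading off the $\Q$-grading). The only quibble is terminological: the lemma as stated says ``rank-three'', but $X_0$ in~\eqref{eq:E7-calc-X0} is rank-two, and your proposal (like the paper's own argument) rightly works with the $2\times 2$ ansatz; the ``rank-three'' is evidently a typo in the statement rather than a gap in your proof.
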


\medskip

As before we compute the endomorphisms of~$X_u$ in $\hmfgr(\C[u,v,x,y],x^3 + xy^3 - u^{18} - v^2)$ to be the 27-dimensional space 
\be\label{eq:EndhmfX-E7}
\End
(X_u) 
\cong 
\Big(
\bigoplus_{i=0}^{16} \C_{i\cdot \frac{2}{18}}
\Big)
\oplus 
\Big(
\bigoplus_{i=4}^{12} \C_{i\cdot \frac{2}{18}}
\Big)
\oplus
\C_{\frac{8}{18}}
\, , 
\ee
and using the code of \cite{khovhompaper} together with row and column manipulations as in the previous example, for the solution
\be 
t_\text{cft} = 3 \big( \zeta_{18} + \zeta^{-1}_{18}\big) - 2 \big(\zeta^2_{18} + \zeta^{-2}_{18}\big) 
\, \quad  \text{where} \quad \zeta_d = \E^{2 \pi \I / d} \, ,
\ee 
to~\eqref{eq:stE7} we find that $X_{u'}^\dagger \otimes X_u$ is isomorphic, up to the factor $I_{v^2}$, to the rank-three graded matrix factorisation
\be\label{eq:AMF-E7}
P_{\{0\}} \oplus P_{\{-4,-3,\ldots,4\}} \oplus P_{\{-8,-7,\ldots,8\}} \, ,
\ee
proving the second third of Corollary~\ref{cor:Etypemod}. 

\begin{remark}\label{rem:galE7}
As in Remark \ref{rem:galE6} one can check that the other solutions~$t$ to~\eqref{eq:stE7} form a single orbit under the Galois group $\mathrm{Gal}(\Q(\zeta_{18})/\Q)$: the other two solutions are
\begin{align}
% t' & = 7 \big(\I  \tfrac{\sqrt{3}}{2} -\tfrac{37}{2}\big)^{-1/3}+\big(\I \tfrac{\sqrt{3}}{2} - \tfrac{37}{2} \big)^{1/3} \nonumber \, , \\
% t'' & = -\tfrac{1}{2} \big(1-\I  \sqrt{3}\big) \big(\I  \tfrac{\sqrt{3}}{2} -\tfrac{37}{2}\big)^{1/3} - \tfrac{7}{2} \big(1+\I  \sqrt{3}\big)
% \big(\I \tfrac{\sqrt{3}}{2} - \tfrac{37}{2} \big)^{-1/3}
\sigma_5(t_\text{cft}) &= 3 \big( \zeta_{18}^5 + \zeta^{-5}_{18}\big) - 2 \big(\zeta^{10}_{18} + \zeta^{-10}_{18}\big) \, , \nonumber 
\\
\sigma_7(t_\text{cft}) &= 3 \big( \zeta_{18}^7 + \zeta^{-7}_{18}\big) - 2 \big(\zeta^{14}_{18} + \zeta^{-14}_{18}\big) \, .
\end{align}
Computing the actions of $\sigma_{5*}$ and $\sigma_{7*}$ on $\{-4,-3,\ldots,4\}$ and $\{-8,-7,\ldots,8\}$, e.\,g.~$\sigma_{5*}(\{-4,-3,\ldots,4\}) = \{0,\pm 2,\pm 3,\pm 5,\pm 8\}$, one thus finds that 
\begin{align}
X^\dagger_{u'} \otimes X_u & \cong 
\big( P_{\{0\}} \oplus P_{\lbrace 0,\pm 2,\pm 3,\pm 5,\pm 8 \rbrace}\oplus P_{\{-8,-7,\ldots,8\}} \big) \otimes_k I_{v^2}
&  \text{for }   t = \sigma_5(t_\text{cft}) \, , \nonumber
\\
X^\dagger_{u'} \otimes X_u & \cong 
\big( P_{\{0\}} \oplus P_{\lbrace 0,\pm 3,\pm 4,\pm 7,\pm 8 \rbrace}\oplus P_{\{-8,-7,\ldots,8\}} \big) \otimes_k I_{v^2}
&  \text{for }  t = \sigma_7(t_\text{cft}) \, .
\end{align}
\end{remark}

\subsubsection*{$\boldsymbol{V^{(\mathrm{E}_8)} \sim V^{(\mathrm{A}_{29})}}$}

The $\mathrm{E}_8$-case is considerably more complicated than the cases already treated. This starts already in step (i) as the smallest factorisations of $V^{(\mathrm{E}_8)} - v^2$ are of rank four. Let us choose
\be\label{eq:E8-calc-X0}
	d_{X_0}^1 = \begin{pmatrix}
		 -v & 0 & x & y \\
		0 & -v & y^4 & -x^2 \\
		x^2 & y & -v & 0 \\
		y^4 & -x & 0 & -v 
		\end{pmatrix}
		,\quad
	d_{X_0}^0 = \begin{pmatrix}
		v & 0 & x & y \\
		0 & v & y^4 & -x^2 \\
		x^2 & y & v & 0 \\
		y^4 & -x & 0 & v
		\end{pmatrix}
		 ,
\ee
see \cite[Sect.\,5]{kst0511155}. This is a matrix factorisation of $x^3 + y^5 - v^2$.

In step (ii), the most generic homogeneous deformation of $d_{X_0}^1$ (deforming also the zero entries, of course) has 82 free parameters. Via the similarity transformation in step (iii) one can reduce this to 60 parameters. We refrain from giving this general deformation explicitly. 

For step (iv) one has to use a different method than in the other cases, because now $\det(d_{X_0}^1) = (x^3 + y^5 - v^2)^2$ and imposing $\det(d_{X_u}^1) = (x^3 + y^5 - u^{30}-v^2)^2$ turns out to be impractical as it results in too many non-linear conditions. Instead, we make the ansatz $d_{X_u}^0 = (x^3 + y^5 - u^{30}-v^2)^{-1} \cdot (d_{X_u}^1)^\#$ and require that $d_{X_u}^0$ be a matrix with polynomial entries. As before, this leads to a (very long) matrix factorisation of $x^3 + y^5 - u^{30}-v^2$ in terms of two parameters $s,t$, where $t$ satisfies an eighth order equation and $s^{30}$ is equal to some polynomial in $t$. The eighth order equation, however, is a product of two fourth order ones, and we select one of these and use it to simplify the matrix factorisation. One is left with the matrix
$m := d_{X_u}^1$, where with $\varsigma=su$ the matrix entries $m_{ij}$ are as follows: 
\begin{align*}
m_{11} &= - v - \tfrac{(1 + t) (3 + t) (5 + 7t)}{64}  \varsigma^{15} - \tfrac{1 + t}4 \varsigma^5 x - \tfrac{19 + 47 t + 25 t^2 + 5 t^3}{192} \varsigma^9 y - \tfrac12 \varsigma^3 y^2 \, , \\
m_{12} &= \varsigma\, , \\
m_{13} &= x + \tfrac{(-1 + t) (23 + 36 t + 5 t^2)}{96} \varsigma^{10}\, , \\
m_{14} &=  y\, , \\
m_{21} &= \tfrac{-138089 - 562209 t - 600371 t^2 - 116355 t^3}{11520}) \varsigma^{29} 
+ \tfrac{-73 - 280 t - 285 t^2 - 50 t^3}{160} \varsigma^{19} x
\\ & \qquad
+ \tfrac{-29 - 25 t + 25 t^2 + 5 t^3}{96} \varsigma^9 x^2 
+ \tfrac{-2107 - 8545 t - 9085 t^2 - 1735 t^3}{960} \varsigma^{23} y  
\\ & \qquad
+ \tfrac{-33 - 57 t - 11 t^2 + 5 t^3}{64} \varsigma^{13} x y
+ \tfrac{(5 + 7 t) (13 + 36 t + 7 t^2)}{384} \varsigma^{17} y^2  
- \tfrac{3 + 4 t}{4} \varsigma^7 x y^2 
\\ & \qquad 
+ \tfrac{-35 - 49 t + 7 t^2 + 5 t^3}{96} \varsigma^{11} y^3
- \varsigma x y^3 
- \tfrac12 (1 + t) \varsigma^5 y^4 \, , 
\\
m_{22} &= -v
+ \tfrac{(1 + t) (3 + t) (5 + 7 t)}{64} \varsigma^{15}
+ \tfrac{1 + t}{4} \varsigma^5 x
+ \tfrac{19 + 47 t + 25 t^2 + 5 t^3}{192} \varsigma^9 y
+ \tfrac12 \varsigma^3 y^2\, , \\
m_{23} &= y^4 
+ \tfrac{3587 + 14687 t + 15785 t^2 + 3125 t^3}{1920} \varsigma^{24}
+ \tfrac{(1 - t) (23 + 36 t + 5 t^2}{96} \varsigma^9 v
\\ & \qquad
+ \tfrac{43 + 102 t + 67 t^2 + 12 t^3}{96} \varsigma^{14} x
- \tfrac{(1 + t) (81 + 126 t + 17 t^2)}{384} \varsigma^{18} y
+ \tfrac{2 + 3 t}{4} \varsigma^8 x y
\\ & \qquad
+ \tfrac{(2 + t) (7 + 6 t - 5 t^2}{96} \varsigma^{12} y^2
+ \varsigma^2 x y^2 
+ \tfrac{1 + 2 t}{4} \varsigma^6 y^3\, , 
\\
m_{24} &= - x^2
+ \tfrac{(-1 + t) (23 + 36 t + 5 t^2)}{96} \varsigma^{10} x
+ \tfrac{2 + 21 t + 32 t^2 + 9 t^3}{48} \varsigma^{14} y\, , 
\\
m_{31} &= x^2
+ \tfrac{(1 - t) (23 + 36 t + 5 t^2)}{96} \varsigma^{10} x
- \tfrac{2 + 21 t + 32 t^2 + 9 t^3}{48} \varsigma^{14} y \, , 
\\
m_{32} &= y\, , \\
m_{33} &= - v
+ \tfrac{-37 - 39 t + 29 t^2 + 15 t^3}{192} \varsigma^{15} 
+ \tfrac{1 + t}4 \varsigma^5 x
+ \tfrac{-65 - 73 t + 37 t^2 + 5 t^3}{192} \varsigma^9 y
- \tfrac12 \varsigma^3 y^2\, , 
\\
m_{34} &= 
\tfrac{(1 - t) (23 + 36 t + 5 t^2)}{96} \varsigma^{11}
+ \varsigma x 
+ \tfrac{1 + t}2 \varsigma^5 y \, , 
\\
m_{41} &= y^4
+ \tfrac{3587 + 14687 t + 15785 t^2 + 3125 t^3}{1920} \varsigma^{24}
+ \tfrac{(-1 + t) (23 + 36 t + 5 t^2)}{96} \varsigma^9 v
\\ & \qquad
+ \tfrac{43 + 102 t + 67 t^2 + 12 t^3}{96} \varsigma^{14} x
- \tfrac{(1 + t) (81 + 126 t + 17 t^2}{384} \varsigma^{18} y
+ \tfrac{2 + 3 t}4 \varsigma^8 x y
\\ & \qquad
+ \tfrac{(2 + t) (7 + 6 t - 5 t^2)}{96} \varsigma^{12} y^2
+ \varsigma^2 x y^2 
+ \tfrac{1 + 2 t}4 \varsigma^6 y^3\, , \\
m_{42} &= -x 
+ \tfrac{(1 - t) (23 + 36 t + 5 t^2)}{96} \varsigma^{10} \, , 
\\
m_{43} &= 
-\tfrac{569 + 2615 t + 2855 t^2 + 425 t^3}{1920} \varsigma^{19}
+ \tfrac{17 + t - 37 t^2 - 5 t^3}{96} \varsigma^9 x
+ \tfrac{-17 - 17 t + 13 t^2 + 5 t^3}{64} \varsigma^{13} y 
\\ & \qquad
- \tfrac{1 + 2 t}4 \varsigma^7 y^2
- \varsigma y^3 \, , 
\\
m_{44} &= -v
+ \tfrac{37 + 39 t - 29 t^2 - 15 t^3}{192} \varsigma^{15}
- \tfrac{1 + t}4 \varsigma^5 x  
+ \tfrac{65 + 73 t - 37 t^2 - 5 t^3}{192} \varsigma^9 y
+ \tfrac12  \varsigma^3 y^2 \, .
\end{align*}
The parameters $s,t$ can be any solution of
the equations
\begin{align}
& s^{30} = \tfrac{1}{4} (45308593275 \, t^3 - 32199587625 \, t^2 - 973905678975 \, t - 395277903075) 
\, , \nonumber 
\\
& 5 \, t^4 - 110 \, t^2 - 120 \, t - 31 = 0 \, .
\label{eq:stE8}
\end{align}

As already noted above, the matrix $d_{X_u}^0$ is given by $(x^3 + y^5 - u^{30}-v^2)^{-1} \cdot (d_{X_u}^1)^\#$, which has polynomial entries provided $s,t$ solve \eqref{eq:stE8}. 
It is now straightforward to determine the $\Q$-gradings on~$X_u$ to be given by
\be
U_{X_u} (\beta)= 
\E^{\I \alpha \beta} \cdot 
\operatorname{diag} \left( 
1, \, 
\E^{-14\I \beta/15}, \, 
\E^{-\I \beta/3} , \, 
\E^{-3\I \beta/5} , \, 
1, \, 
\E^{-14\I \beta/15}, \, 
\E^{-\I \beta/3} , \, 
\E^{-3\I \beta/5} 
\right)
\ee
with $\alpha \in \Q$, 
and compute the quantum dimensions to be
\be
	\diml(X_u) = 2s
	\, , \quad
	\dimr(X_u) = \tfrac{5}{16} (-27 - 86 \, t - 3 \, t^2 + 4 \, t^3)  \, s^{-1} \ ,
\ee
which are indeed non-zero for all choices of $s,t$. 
This concludes the proof of Theorem~\ref{thm:ADEorbifolds}. 

Because we have made a number of restricting assumptions before arriving at $(X,d_{X_u})$, we cannot claim a statement analogous to Lemmas \ref{lem:D-calc-summary}, \ref{lem:E6-calc-summary} and \ref{lem:E7-calc-summary}. 

\medskip

Finally we compute the endomorphisms of~$X_u$ in $\hmfgr(\C[u,v,x,y],x^5 + y^3 - u^{30} - v^2)$ to be the 60-dimensional space 
\be\label{eq:EndhmfX-E8}
\End
(X_u) 
\cong 
\Big(
\bigoplus_{i=0}^{28} \C_{i\cdot \frac{2}{30}}
\Big)
\oplus 
\Big(
\bigoplus_{i=5}^{23} \C_{i\cdot \frac{2}{30}}
\Big)
\oplus 
\Big(
\bigoplus_{i=9}^{19} \C_{i\cdot \frac{2}{30}}
\Big)
\oplus
\C_{\frac{28}{30}}
\, .
\ee
For the solution
\be
 %	t_\text{cft} = \sqrt{5} - (6 + 6/\sqrt{5})^{1/2}
	t_\text{cft} = -\tfrac15 \Big( 7 + 4 \big(\zeta_{30} + \zeta_{30}^{-1} \big) + 8 \big(\zeta_{30}^2 + \zeta_{30}^{-2} \big) - 16 \big(\zeta_{30}^3 + \zeta_{30}^{-3} \big) \Big)
\ee
of~\eqref{eq:stE8}, $X_{u'}^\dagger \otimes X_u$ is isomorphic, up to the factor $I_{v^2}$, to the rank-four graded matrix factorisation
\be\label{eq:AMF-E8}
P_{\{0\}} \oplus P_{\{-5,-4,\ldots,5\}} \oplus P_{\{-9,-8,\ldots,9\}} \oplus P_{\{-14,-13,\ldots,14\}} \, .
\ee
This concludes the proof of Corollary~\ref{cor:Etypemod}. 

\begin{remark}\label{rem:galE8}
The other solutions of~\eqref{eq:stE8} are found via the Galois group $\mathrm{Gal}(\Q(\zeta_{30})/\Q)$ to be 
\begin{align}
\sigma_7(t_\text{cft}) &= 
	-\tfrac15 \Big( 7 + 4 \big(\zeta_{30}^{7} + \zeta_{30}^{-7} \big) + 8 \big(\zeta_{30}^{14} + \zeta_{30}^{-14} \big) - 16 \big(\zeta_{30}^{21} + \zeta_{30}^{-21} \big) \Big) \, , \nonumber \\
\sigma_{11}(t_\text{cft}) &= 
	-\tfrac15 \Big( 7 + 4 \big(\zeta_{30}^{11} + \zeta_{30}^{-11} \big) + 8 \big(\zeta_{30}^{22} + \zeta_{30}^{-22} \big) - 16 \big(\zeta_{30}^3 + \zeta_{30}^{-3} \big) \Big) \, , \nonumber \\
\sigma_{13}(t_\text{cft}) &= 
	-\tfrac15 \Big( 7 + 4 \big(\zeta_{30}^{13} + \zeta_{30}^{-13} \big) + 8 \big(\zeta_{30}^{26} + \zeta_{30}^{-26} \big) - 16 \big(\zeta_{30}^9 + \zeta_{30}^{-9} \big) \Big) \, . 
\end{align}
The corresponding decompositions of $X_{u'}^\dagger \otimes X_u$ are, with $Z = P_{\{0\}} \oplus P_{\{-14,-13,\ldots,14\}}$ and up to the factor $I_{v^2}$,
\begin{align}
& Z \oplus 
P_{\{0,\pm 2,\pm 5,\pm 7,\pm 9,\pm 14\}} \oplus P_{\{0,\pm 2,\pm 3,\pm 4,\pm 5,\pm 7,\pm 9,\pm 11,\pm 12,\pm 14\}} 
&&  \text{for }  t = \sigma_7(t_\text{cft}) \, , \nonumber \\
& Z \oplus 
P_{\{0,\pm 3,\pm 5,\pm 8,\pm 11,\pm 14\}} \oplus P_{\{0,\pm 2,\pm 3,\pm 5,\pm 6,\pm 8,\pm 9,\pm 11,\pm 13,\pm 14\}} 
&&  \text{for }  t = \sigma_{11}(t_\text{cft})\, , \nonumber \\
& Z \oplus 
P_{\{0,\pm 3,\pm 4,\pm 5,\pm 8,\pm 9,\pm 13\}} \oplus P_{\{0,\pm 1,\pm 4,\pm 5,\pm 8,\pm 9,\pm 12,\pm 13,\pm 14 \}} 
&&  \text{for }  t = \sigma_{13}(t_\text{cft}) \, .
\end{align}
\end{remark}

\subsubsection*{Another method to construct orbifold equivalences}

There is another way, slightly different from the one described at the beginning of Section~\ref{subsec:simple-sing}, of obtaining the orbifold equivalences $V^{(\mathrm{D}_{d/2+1})} \sim V^{(\mathrm{A}_{d-1})}$, $V^{(\mathrm{E}_6)} \sim V^{(\mathrm{A}_{11})}$, and $V^{(\mathrm{E}_7)} \sim V^{(\mathrm{A}_{17})}$. 
Roughly, this method starts with a matrix factorisation~$M$ of a potential~$W$ and computes the deformations \cite{Laudal1983, Siqveland2001}~$M_u$ of~$M$. Then instead of setting the deformation parameters to be solutions of the obstruction equations (which would give factorisations of~$W$), one tries to re-interpret some of the parameters~$u$ as variables of another potential~$V$ while choosing the remaining parameters such that~$M_u$ becomes a factorisation of $W-V$. 

As this method may prove useful to construct further orbifold equivalences, we illustrate it in more detail in the example of $V^{(\mathrm{E}_6)} \sim V^{(\mathrm{A}_{11})}$: 
\begin{enumerate}
\item Start with the matrix factorisation~\eqref{eq:E6-calc-X0} of $W=V^{(\mathrm{E}_6)}-v^2$ and compute its deformations, using e.\,g.~the implementation of \cite{cdr1112.3352}.\footnote{%
Of course the indecomposable object~\eqref{eq:E6-calc-X0} has no nontrivial deformations, but that does not matter here.} 
\item One obtains a 4-by-4 matrix~$D$ with polynomial entries in the variables $v,x,y$ and two deformation parameters $u_1, u_2$. By construction~$D$ squares to $W \cdot 1 + R$, where the matrix~$R$ vanishes if $u_1, u_2$ satisfy the obstruction equations. 
\item Interpret the parameter~$u_1$ as the (rescaled) variable~$u$ and choose $u_2 \in \C[u]$ such that~$D$ becomes a factorisation of $V^{(\mathrm{E}_6)}-u^{12}-v^2$. This turns out to be isomorphic to~$X_u$ of~\eqref{eq:E6-calc-solution} with $t=-1/\sqrt{3}$. 
\end{enumerate}

In the cases $W+v^2 \in \{ V^{(\mathrm{D}_{d/2+1})}, V^{(\mathrm{E}_6)}, V^{(\mathrm{E}_7)} \}$ this method is especially straightforward as the matrix~$D$ already squares to $(W+f)\cdot 1$, where~$f$ is a polynomial with leading term $u_1^{\,d/2}, u_1^{12}, u_1^{18}$, respectively. 
On the other hand, for $W = V^{(\mathrm{E}_8)}-v^2$ the square of~$D$ is a more generic matrix, rendering the problem $D^2 = (W - u^{30}) \cdot 1$ much more computationally involved.

\section{Comparison to conformal field theory}\label{sec:CFTcomparison}

In this section we describe how the orbifold equivalences established in Theorem~\ref{thm:ADEorbifolds} compare to -- and are in fact predicted by -- the correspondence between $\mathcal N=2$ supersymmetric Landau-Ginzburg models (LG models) and $\mathcal N=2$ superconformal field theories (CFTs) in two dimensions. We do not aim for a self-contained discussion, but we provide pointers to the literature where more details can be found.

\medskip

One can argue that for a given LG model there is an associated CFT, namely the infrared fixed point theory \cite{m1989,vw1989,howewest}. The Virasoro central charge of the infrared CFT is computed from the potential of the LG model via~\eqref{eq:def-Vir-central-charge}.
Renormalisation group flow also provides a map from certain boundary conditions and defects lines in the LG model to certain conformal boundary conditions and defects lines in the CFT, see e.\,g.~\cite{kl0210, bhls0305, br0707.0922}. In general, charges and correlators of fields in the LG model vary along the flow. However, by $\mathcal N=2$ supersymmetry the charges and correlators in a subsector of the LG model consisting of chiral primary fields are preserved and can be directly compared to their CFT equivalents.
Below, we will carry out this comparison for the defects~$X_u$ of Section~\ref{subsec:simple-sing} and the charge spectrum of the chiral primary fields supported on these defects.

\medskip

Conformal field theories form a bicategory, where objects are CFTs, 1-morphisms are topological defect lines and 2-morphisms are topological junction fields, see \cite{dkr1107.0495} for details. This bicategory has adjoints and is pivotal, hence the general concept of orbifold equivalence is applicable. There cannot be a topological defect joining two CFTs of different Virasoro central charge. As a consequence, equality of central charges provides a \textsl{necessary} condition for an orbifold equivalence to exist, cf.~Proposition~\ref{prop:necessary}. Contrary to the matrix factorisation framework, for CFTs a useful \textsl{sufficient} condition is known for the existence of an orbifold equivalence \cite{ffrs0909.5013}: 
\begin{quote}
Let $V$ be a rational vertex operator algebra. 
Suppose two CFTs have unique bulk vacua and their algebra of bulk fields contains $V \otimes_\C \overline V$ as a subalgebra. Then these two CFTs are orbifold equivalent.
\end{quote}
Here the bar over $V$ indicates that the second factor is embedded in the anti-holomorphic fields. 

Those LG models whose potentials define simple singularities are believed to renormalise to CFTs that contain the $\mathcal N=2$ minimal super Virasoro vertex operator algebra with the same central charge. The latter are rational, so by the above criterion all these CFTs of the same central charge are orbifold equivalent.

One may expect topological defects between two infrared CFTs to have analogues already in the corresponding LG model. Furthermore, one may expect that composition of topological defects commutes with renormalisation group flow. If so, the orbifold equivalences of infrared CFTs should exist also for LG models. This is the reason why Theorem~\ref{thm:ADEorbifolds} is expected from the above CFT criterion.

\medskip

After these qualitative considerations, we now turn to quantitative, more technical comparisons. The correspondence between topological defect lines of LG models of A-type singularities and those of the associated diagonal $\mathcal N=2$ CFTs motivates the following conjecture, whose ingredients we explain directly after stating it.

\begin{conjecture}\label{conj:tensor-equiv}
For any integer $d \geqslant 3$, consider the monoidal subcategory of $\hmfgr(\C[x,x'],x^d - x'^d)$ whose morphisms only have $\Q$-degree zero, and which is generated by $\{ P_{\{a,a+1,\dots,a+b\}} \,|\, a,b \in \Z_d\}$ with respect to tensor products and direct sums. This subcategory is monoidally equivalent to $(\mathcal{C}_{d-2}^{\mathfrak{su}(2)} \boxtimes \mathcal{C}_{2d}^{\mathfrak{u}(1)})_{\textrm{NS}}$.
\end{conjecture}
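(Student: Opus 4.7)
The plan is to construct an explicit tensor functor from the specified subcategory of $\hmfgr(\C[x,x'], x^d - x'^d)$ to $(\mathcal{C}_{d-2}^{\mathfrak{su}(2)} \boxtimes \mathcal{C}_{2d}^{\mathfrak{u}(1)})_{\mathrm{NS}}$, and then verify it is an equivalence by matching simple objects, gradings, and fusion rules.

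\textbf{First}, I would classify the simples on both sides. Up to $\Q$-grading shifts, the interval factorisations $P_{\{a,a+1,\ldots,a+b\}}$ with $a\in\Z_d$ and $b\in\{0,1,\ldots,d-2\}$ should exhaust the simple objects of the degree-zero subcategory (one needs to rule out stray coincidences coming from the $\sigma$-twists of~\eqref{eq:P_{-k}-action} that remain within the subcategory), giving $d(d-1)$ isomorphism classes. On the CFT side, the Neveu--Schwarz sector of $\mathcal{C}_{d-2}^{\mathfrak{su}(2)} \boxtimes \mathcal{C}_{2d}^{\mathfrak{u}(1)}$ consists of pairs $(l,m)$ with $l\in\{0,\ldots,d-2\}$ and $m\in\Z_{2d}$ subject to $l+m\equiv 0 \pmod 2$, again yielding $d(d-1)$ simples.

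\textbf{Second}, I would propose the bijection $P_{\{a,a+1,\ldots,a+b\}} \longmapsto (b,\,2a+b)$, which automatically satisfies the NS constraint since $b+(2a+b)=2(a+b)$ is even. To check this assignment is forced, compare quantum dimensions: from~\eqref{eq:P_S-def} one has
\be
\diml\big(P_{\{a,\ldots,a+b\}}\big) \;=\; \sum_{k=0}^{b} \zeta_d^{\,a+k} \;=\; \zeta_{2d}^{\,2a+b}\, \frac{\sin\!\big(\pi(b{+}1)/d\big)}{\sin(\pi/d)}\,,
\ee
which factorises precisely as the $\mathfrak{u}(1)_{2d}$ phase $\zeta_{2d}^{m}$ times the $\mathfrak{su}(2)_{d-2}$ quantum dimension $[b{+}1]_q$ of the spin-$b/2$ representation. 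The same bijection should intertwine pivotal structures, using $P_S^\dagger \cong P_{-S}$ on the LG side, and match the ribbon twists once the $\Q$-gradings of the $P_{\{a,\ldots,a+b\}}$ are translated into the conformal weights $h_{(l,m)}$ of the minimal model.

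\textbf{Third}, I would verify fusion rules. Using the Koszul-resolution techniques of \cite{br0707.0922} for tensor products of permutation matrix factorisations, one can decompose $P_{\{a,\ldots,a+b\}} \otimes P_{\{c,\ldots,c+e\}}$ as a direct sum of interval factorisations whose length parameters run through $b+e,\,b+e-2,\,\ldots,\,|b-e|$ (truncated to respect $\leqslant d-2$, with the remaining terms falling outside the degree-zero subcategory), and whose starting points shift in accordance with the $\Z_{2d}$ group law on the $m$-labels. This should reproduce exactly the Clebsch--Gordan rules of $\mathfrak{su}(2)_{d-2}$ tensored with $\Z_{2d}$ fusion, i.\,e.~the Verlinde fusion rules of the NS sector.

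\textbf{The main obstacle} will be upgrading the matching of fusion rules, quantum dimensions, and twists to a genuine monoidal equivalence, since in principle the two associativity constraints could differ by a nontrivial $3$-cocycle class. My approach would be to invoke uniqueness results for (ribbon) fusion categories with prescribed fusion ring and modular data to pin down the NS subcategory on the CFT side, and to exploit the rigidity of the $\Q$-graded bicategory of Landau--Ginzburg models (pivotal, with its dimensions valued in an abelian number field) to prove that any tensor functor realising the fusion-ring isomorphism automatically preserves the coherence data. Alternatively, one could construct the associators directly out of the Koszul model for $P_S \otimes P_T$ and verify the pentagon axiom on a generating set of simples. I expect this coherence step, rather than the essentially combinatorial matching of objects and fusions, to be where the real work lies.
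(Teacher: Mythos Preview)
The statement you are attempting to prove is explicitly labelled a \emph{Conjecture} in the paper; there is no proof in the paper to compare your proposal against. Immediately after stating it, the authors summarise its status: the functor on objects is exactly your bijection $P_{\{a,\ldots,a+b\}}\mapsto U_{b,\,2a+b}$; compatibility with the tensor product on isomorphism classes (your ``Third'' step) is attributed to \cite{br0707.0922}; only \emph{some} of the associativity isomorphisms have been shown to match, in \cite{cr0909.4381}; and a full proof is announced only for odd~$d$, in work then in preparation \cite{DRCR}. So your first three steps reproduce precisely what the paper records as already known, and your diagnosis that ``the real work lies'' in the coherence step is on the mark --- that is exactly the gap the paper leaves open.

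Where your proposal does not close that gap: neither of your two suggested strategies for the associator is sufficient as stated. Invoking ``uniqueness results for (ribbon) fusion categories with prescribed fusion ring and modular data'' is problematic because the NS subcategory $(\mathcal{C}_{d-2}^{\mathfrak{su}(2)} \boxtimes \mathcal{C}_{2d}^{\mathfrak{u}(1)})_{\mathrm{NS}}$ is not itself modular, and in any case there can be inequivalent fusion categories sharing a fusion ring (and even $S,T$-data), so one would need a much sharper rigidity statement than you indicate. Your alternative --- computing associators directly from the Koszul model and checking the pentagon on generators --- is precisely what \cite{cr0909.4381} carried out only partially; the paper's phrasing ``some, but far from all'' signals that this route is laborious and was not completed there. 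In short, your outline is consistent with the partial evidence the paper cites, but it does not supply the missing coherence argument, and the paper does not claim to either.
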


The matrix factorisations $P_S$ are those introduced in~\eqref{eq:P_S-def}. The relevant $\Q$-grading on $P_S$ follows from the R-charge discussion in \cite[Sect.\,3.2]{cr0909.4381}. By $\mathcal{C}_{k}^{\mathfrak{su}(2)}$ we denote the modular tensor category of integrable highest weight representations of the affine Lie algebra $\widehat{\mathfrak{su}}(2)$ at level~$k$. It has simple objects labelled by $\{0,1,\dots,k\}$. $\mathcal{C}_{2d}^{\mathfrak{u}(1)}$ is the modular tensor category of representations of the rational lattice vertex operator algebra built out of single free boson which has simple objects labelled by $\{0,1,\dots,2d-1\}$, and~$\boxtimes$ denotes the Deligne product, see e.\,g.~\cite{BaKiBook}. Details and references to the original literature can be found in \cite[App.\,A.2]{cr0909.4381}. 

We set $\mathcal{D} = \mathcal{C}_{d-2}^{\mathfrak{su}(2)} \boxtimes \mathcal{C}_{2d}^{\mathfrak{u}(1)}$ and denote the simple objects of~$\mathcal{D}$ by $U_{l,m}$, where $l \in \{0,1,\dots,d-2\}$ and $m \in \{0,1,\dots,2d-1\}$. The notation $(-)_{\textrm{NS}}$ refers to passing to the full subcategory generated by the $U_{l,m}$ with $l+m$ even. Via the coset construction, this describes the NS sector of the representations of the $\mathcal N=2$ minimal super Virasoro vertex operator algebra at central charge $c = 3 (1-2/d)$, see again \cite[App.\,A.2]{cr0909.4381} for details and references.\footnote{%
In \cite{cr0909.4381} the $(-)_{\textrm{NS}}$ is missing in (A.38) and (A.45). This is a typo or an error, depending on one's disposition towards the authors.
} 

\medskip

The status of Conjecture~\ref{conj:tensor-equiv} is currently as follows.
\begin{itemize}
\item[-] The functor which conjecturally provides the tensor equivalence acts on simple objects as $P_{\{a,a+1,\dots,a+b\}} \mapsto U_{b,b+2a}$. It is verified in \cite{br0707.0922} that this is compatible with the tensor product on the level of isomorphism classes.
\item[-] Some, but far from all, associativity isomorphisms were proved to be compatible in \cite{cr0909.4381}.
\item[-] For odd $d$, the conjecture will be proved in \cite{DRCR}.
\end{itemize}

\subsubsection*{Comparison of algebra objects}

According to \cite{tft1, Fjelstad:2006aw}, the full conformal field theories (with unique bulk vacuum) that can be constructed starting from a rational vertex operator algebra $V$ are parametrised by Morita classes of $\Delta$-separable symmetric Frobenius algebras in the modular tensor category of representations of $V$. The algebras relevant for the CFT describing the infrared fixed point of an LG model with ADE-type potential are predicted from \cite{g9608063, g0812.1318} to be non-trivial only in the $\mathfrak{su}(2)$ factor of $\mathcal{D}$. More specifically, they are representatives of the ADE classification of Morita classes of such algebras in $\mathcal{C}_{d-2}^{\mathfrak{su}(2)}$ given in \cite{o0111139}. As objects in~$\mathcal{D}$ these algebras are
\begin{align} \label{eq:CFT-ADE-list}
F^{(\textrm{A}_{d-1})} &= U_{0,0} && \text{for $d \geqslant 2$} \, ,
\nonumber \\
F^{(\textrm{D}_{d/2+1})} &= U_{0,0} \oplus U_{d-2,0}  && \text{for $d\in 2\Z_+$} \, ,
\nonumber \\
F^{(\textrm{E}_{6})} &= U_{0,0} \oplus U_{6,0}  && \text{for $d=12$} \, ,
\nonumber \\
F^{(\textrm{E}_{7})} &= U_{0,0} \oplus U_{8,0} \oplus U_{16,0} && \text{for $d=18$} \, ,
\nonumber \\
F^{(\textrm{E}_{8})} &= U_{0,0} \oplus U_{10,0} \oplus U_{18,0} \oplus U_{28,0} && \text{for $d=30$} \, .
\end{align}

Topological defects between a diagonal, i.\,e.~A-type, CFT and another CFT from the above list with the same value of $d$ given by some algebra $F$ are described by $F$-modules in $\mathcal{D}$ \cite{Frohlich:2006ch}. This is consistent with the point of view of orbifold equivalences. There, the algebra describing the theory on one side of a topological defect $X$ in terms of the other is $X^\dagger \otimes X$, see \cite{cr1210.6363} for details. In the present setting one has $X = {}_FF$, $X^\dagger = F_F$, and $F_F \otimes_F {}_FF \cong F$ as algebras. 

On the matrix factorisation side, the objects underlying the algebras describing the D- and E-type singularities as orbifolds of A-type singularities are those in \eqref{eq:PPinDcase}, \eqref{eq:AMF-E6}, \eqref{eq:AMF-E7} and~\eqref{eq:AMF-E8} above. Under the tensor equivalence of Conjecture \ref{conj:tensor-equiv}, they are indeed mapped to the corresponding objects in the list~\eqref{eq:CFT-ADE-list}. (It would of course be enough to land in the same Morita class, but for our choices of matrix factorisations we get the actual representatives chosen in \eqref{eq:CFT-ADE-list}.)

\subsubsection*{Comparison of defect spectra}

Given the above observations on the objects underlying the algebras establishing the orbifold equivalences, it is natural to expect that the matrix factorisations of the potential differences described in Section \ref{subsec:simple-sing} get mapped to the topological defects described by the modules ${}_FF$ for $F$ the corresponding algebra in \eqref{eq:CFT-ADE-list}. This expectation can be tested by comparing the spectra of chiral primaries. 

The chiral primaries in $\mathcal{D}$ are the ground states in the representations labelled $U_{l,l}$ with $l \in \{0,1,\dots,d-2\}$; their charge is $l/d$. The space of chiral primaries of holomorphic and antiholomorphic labels $l$ and $m$, respectively, on the defect ${}_FF$ is isomorphic to the vector space
\be
\Hom_F(F \otimes U_{l,l} \otimes U_{m,-m}, F) \cong \Hom(U_{l,l} \otimes U_{m,-m}, F) \, , 
\ee
where `$\Hom_F$' comprises only $F$-module maps in $\mathcal{D}$, and `$\Hom$' all maps in $\mathcal {D}$. We refer to \cite{Frohlich:2006ch} for an explanation of this formula. 
The total charge (as seen on the LG side) of these chiral primaries is $(l+m)/d$.

\medskip

Let us consider the example $F = F^{(\textrm{E}_{6})}$ in some detail. The first thing to note is that $\Hom(U_{l,l} \otimes U_{m,-m}, U_{n,0})$ has dimension zero unless $l=m$. (Actually the fusion rules in $\mathcal{C}_{2d}^{\mathfrak{u}(1)}$ tell us that $l-m \cong 0 \mod 2d$, but for the given range on~$l$ and~$m$ this just amounts to $l=m$.) For $l=m$, the space is one-dimensional if~$n$ is even and $n/2 \leqslant l \leqslant d-2-n/2$, as follows from the fusion rules of $\mathcal{C}_{d-2}^{\mathfrak{su}(2)}$.
From the summand $U_{0,0}$ in $F^{(\textrm{E}_{6})}$ we therefore get one state at each charge $l/6$, $l \in \{0,1,\dots,10\}$, and from the summand $U_{6,0}$ another state of charge $l/6$ for each $l \in \{3,4,\dots,7\}$. 
Hence the total dimension of the space of chiral primaries is 16, and we have perfect agreement with~\eqref{eq:EndhmfX-E6}. 

It is straightforward to carry out the analogous computations for $F = F^{(\textrm{D}_{d/2+1})}$, $F = F^{(\textrm{E}_{7})}$ and $F = F^{(\textrm{E}_{8})}$, again consistent with the charges and multiplicities of the matrix factorisation results listed in~\eqref{eq:PPinDcase}, \eqref{eq:EndhmfX-E7} and~\eqref{eq:EndhmfX-E8}, respectively.

\end{document}